\theoremstyle{plain}
\newtheorem{thm}{Theorem}
\newtheorem{prop}{Proposition}
\newtheorem{lem}{Lemma}
\newtheorem{rem}{Remark}
\newtheorem{defn}{Definition}
\theoremstyle{plain}
\title[Nonlinear wave equation]
{On the nonlinear wave equation with time periodic potential}
\author[V. Petkov]{Vesselin Petkov}
\address{Institut de Math\'ematiques de Bordeaux, 351,
Cours de la Lib\'eration, 33405  Talence, France}
\email{petkov@math.u-bordeaux.fr}
\author[N. Tzvetkov]{Nikolay Tzvetkov}
\address{D\'epartement de Math\'ematiques (AGM ), Universit\'e de Cergy-Pontoise, 2, av. Adolphe Chauvin, 95302 Cergy-Pontoise Cedex, France}
\email{nikolay.tzvetkov@u-cergy.fr}
\begin{document}

\numberwithin{equation}{section}

\maketitle

\def\R{{\mathbb R}}
\def\C{{\mathbb C}}
\def\N{{\mathbb N}}

\def\diam{\mbox{\rm diam}}
\def\rr{{\cal R}}

\def\e{\emptyset}
\def\dQ{\partial Q}
\def\dk{\partial K}
\def\endofproof{{\rule{6pt}{6pt}}}
\def\di{\displaystyle}
\def\dist{\mbox{\rm dist}}

\def\u-{\overline{u}}
\def\du{\frac{\partial}{\partial u}}
\def\dv{\frac{\partial}{\partial v}}
\def\dt{\frac{d}{d t}}
\def\dx{\frac{\partial}{\partial x}}
\def\con{\mbox{\rm const }}
\def\Box{\spadesuit}
\def\nn{{\cal N}}
\def\mm{{\cal M}}
\def\kk{{\cal K}}
\def\ll{{\cal L}}
\def\vv{{\cal V}}

\def\ff{{\cal F}}
\def\i{{\bf i}}

\def\curl{{\rm curl}\,}
\def\dive{{\rm div}\,}
\def\grad{{\rm grad}\,}

\def\dist{\mbox{\rm dist}}
\def\pr{\mbox{\rm pr}}
\def\pp{{\cal P}}
\def\supp{\mbox{\rm supp}}
\def\Arg{\mbox{\rm Arg}}
\def\In{\mbox{\rm Int}}
\def\Re{\mbox{\rm Re}}
\def\li{\mbox{\rm li}} 
\def\ep{\epsilon}
\def\tr{\tilde{R}}
\def\be{\begin{equation}}
\def\ee{\end{equation}}
\def\eps{\epsilon}


\def\ii{{\imath }}
\def\jj{{\jmath }}
\def\tc{\tilde{C}}
\def\II{{\cal I}}
\def\ccij{ \cc_{i'_0,j'_0}[\eta]}
\def\hc{\hat{\cc}}
\def\dd{{\cal D}}

\def\bs{\bigskip}
\def\hrho{\hat{\rho}}
\def\ms{\medskip}
\def\nn{\mathcal N}
\def\J{bf J}
\def\hC{\widehat{C}}
\def\lip{\mbox{\footnotesize\rm Lip}}
\def\Lip{\mbox{\rm Lip}}
\def\cn{{\mathcal N}}
\def\sn{{\mathbb  S}^{n-1}}
\def\Ker {{\rm Ker}\:}

\def\el{E_{\lambda}}
\def\Rc{{\mathcal R}}
\def\Ha{H_0^{ac}}
\def\la{\langle}
\def\ra{\rangle}
\def\Ko{\Ker G_0}
\def\Kd{\Ker G_d}
\def\uc{{\mathcal U}}
\def\fc{{\mathcal F}}
\def\hc{{\mathcal H}}

\begin{abstract} It is known that for some time periodic potentials $q(t, x) \geq 0$ having compact support with respect to $x$ some solutions of the Cauchy problem for the wave equation $\partial_t^2 u - \Delta_x u + q(t,x)u = 0$ have exponentially increasing energy as $t \to \infty$. We show that if one adds a nonlinear defocusing interaction $|u|^ru, 2\leq r < 4,$ then the solution of the nonlinear wave equation exists for all $t \in \R$ and its energy is polynomially bounded as $t \to \infty$ for every choice of $q$. Moreover, we prove that the zero solution of the nonlinear wave equation is instable if the corresponding linear equation has the property mentioned above.
\end{abstract}

\footnote{MSC [2010]: Primary 35L71, Secondary 35L15}

\section{Introduction}
Our goal in this paper is to show that a defocusing nonlinear interaction may improve, in a certain sense, the long time properties of the solutions of the wave equation with a time periodic potential.

Consider the Cauchy problem for the following potential perturbation of the classical wave equation in the Euclidean space $\R^3$
\begin{equation}\label{eq:1.1}
\partial_t^2 u-\Delta_x u+q(t,x)u=0,\quad u(0,x)=f_1(x),\,\, \partial_t u(0,x)= f_2(x)\,,
\end{equation}
where $0 \leq q(t, x) \in C^{\infty}(\R\times\R^3)$ is periodic in time $t$ with period $T > 0$ and has a compact support with respect to $x$ included in $\{x \in \R^3: |x| \leq \rho\},$
for some positive $\rho$. 
It is easy to show that the Cauchy problem \eqref{eq:1.1} is globally well-posed in  $\hc = H^1(\R^3) \times L^2(\R^3)$. The analysis of the long time behavior of the solution of 
 \eqref{eq:1.1} may be quite intricate (see e.g. \cite{P,CPR}).  A slight adaptation of the arguments presented in \cite{CPR} leads the following result.  
 \begin{thm}\label{th1}
 There exist $q$ and $(f_1,f_2)\in \hc$ such that the solution of \eqref{eq:1.1} satisfies :
 \begin{equation}\label{eq:1.2}
\exists\, C>0,\,\,\exists\, \alpha>0\quad {\rm such\,\, that}\quad \forall\, t\geq 0, \quad  \|u(t,\cdot)\|_{H^1(\R^3)}\geq C\, e^{\alpha t}\,. 
 \end{equation}
 \end{thm}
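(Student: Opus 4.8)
The plan is to pass to the monodromy (Floquet) operator of \eqref{eq:1.1}, reduce \eqref{eq:1.2} to the existence of an unstable Floquet eigenvalue, and then produce such an eigenvalue for a suitable $q$ by a parametric resonance construction, adapting \cite{CPR}. First I would rewrite \eqref{eq:1.1} as a first order system for $\mathbf u(t)=(u(t,\cdot),\partial_t u(t,\cdot))$ and denote by $\uc(t,s):\hc\to\hc$ the associated propagator, $\mathbf u(t)=\uc(t,s)\mathbf u(s)$. Since $q\in C^\infty$ is bounded together with all its derivatives, $\uc(t,s)$ is bounded on $\hc$ and depends continuously on $(t,s)$, and the $T$-periodicity of $q$ gives $\uc(t+T,s+T)=\uc(t,s)$, so that for the monodromy operator $\mm:=\uc(T,0)$ one has $\uc(nT,0)=\mm^{\,n}$, $n\in\N$. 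I then claim it suffices to find $q$ for which $\mm$ admits an eigenvalue $\lambda\in\C$ with $|\lambda|>1$ and an eigenvector $w=(w_1,w_2)\in\hc$ with $w_1\neq0$. Indeed, taking $(f_1,f_2)=w$ gives $\mathbf u(nT)=\lambda^n w$, hence $u(nT,\cdot)=\lambda^n w_1$ and $\|u(nT,\cdot)\|_{H^1}=|\lambda|^n\|w_1\|_{H^1}\to\infty$ geometrically.

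To upgrade this to a lower bound valid at every $t\ge0$ and, simultaneously, to justify that the eigenvector genuinely lies in $\hc$, I would use the Floquet form of the mode. Writing $\lambda=e^{-i\mu T}$ with $\mathrm{Im}\,\mu>0$, the corresponding solution has the form $u(t,x)=e^{-i\mu t}p(t,x)$ with $p$ continuous and $T$-periodic in $t$, so that $\|u(t,\cdot)\|_{H^1}=e^{(\mathrm{Im}\,\mu)\,t}\|p(t,\cdot)\|_{H^1}$, which yields \eqref{eq:1.2} with $\alpha=\mathrm{Im}\,\mu=T^{-1}\log|\lambda|>0$ and $C=\min_{0\le t\le T}\|p(t,\cdot)\|_{H^1}>0$. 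Outside $\supp_x q\subset\{|x|\le\rho\}$ the function $u$ solves the free wave equation and satisfies the outgoing radiation condition, so it behaves like $e^{-i\mu(t-|x|)}/|x|$ as $|x|\to\infty$; since $\mathrm{Im}\,\mu>0$, the spatial factor $e^{i\mu|x|}/|x|=e^{i(\Re\mu)|x|}e^{-(\mathrm{Im}\,\mu)|x|}/|x|$ decays exponentially, so $w$ indeed belongs to $\hc$. This is precisely the mechanism converting temporal instability into a spatially localized, exponentially growing mode, and it makes the reduction of the first paragraph legitimate.

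It remains to construct $q$ producing an eigenvalue of $\mm$ outside the closed unit disk. By finite speed of propagation and the compact support of $q$, the part of the evolution relevant to the spectrum of $\mm$ off the unit circle is the interior one in a fixed ball, which can be organized into a Lax--Phillips type reduced problem with compressed monodromy $Z=P\,\mm\,P$, $P$ projecting onto data supported in $\{|x|\le a\}$. To force an eigenvalue of $Z$ outside the unit disk I would take $q(t,x)=a(t)V(x)$ with $V\ge0$ compactly supported and $a\ge0$ $T$-periodic, and test the dynamics against a fixed spatial profile $\phi$: projecting the equation onto $\phi$ produces a Hill-type ODE $\ddot c(t)+\omega(t)^2 c(t)=0$ with $T$-periodic coefficient, for which the classical theory of parametric resonance (the instability tongues of Hill's equation) supplies coefficients whose $2\times2$ monodromy matrix has $|\mathrm{tr}|>2$ and hence a real eigenvalue $>1$. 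Transferring this finite dimensional instability to $Z$, and thus to $\mm$, by a perturbation and eigenvalue-continuity argument while tuning the amplitude and frequency of $a$ and the profile $V$, would give the desired $\lambda$ with $|\lambda|>1$.

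The main obstacle is exactly this last transfer. In the genuine dispersive problem the trapped profile leaks energy to infinity during each period, an effect entirely absent from the finite dimensional Hill equation, so one must quantify this radiative loss --- for instance through decay estimates for the free local energy, or bounds on the cut-off resolvent of the unperturbed evolution --- and verify that the parametric amplification strictly dominates it, so that the eigenvalue really crosses the unit circle and remains strictly outside. This is the step that the adaptation of the arguments of \cite{CPR} is designed to carry out, and everything else above is essentially structural.
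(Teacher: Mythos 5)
Your opening reduction coincides with the paper's: it suffices to produce $q$ for which the monodromy operator $V(T,0)$ \emph{on $\hc$} has an eigenvalue of modulus greater than one. From there, however, the proposal has two genuine gaps. First, the construction is never actually carried out: you sketch a confinement-plus-Hill-equation mechanism and then state that the decisive step --- showing that the parametric amplification dominates the radiative leakage, so that an eigenvalue of the full monodromy operator really lies outside the unit disk --- is an obstacle to be handled by ``adapting \cite{CPR}''. That adaptation \emph{is} the proof of Theorem~\ref{th1}; it cannot be invoked as a black box, because \cite{CPR} establishes the unstable eigenvalue for $U(T,0)$ acting on $H=H_D(\R^3)\times L^2(\R^3)$, whereas the statement here concerns $V(T,0)$ on $\hc=H^1(\R^3)\times L^2(\R^3)$, and it is precisely unclear whether the eigenfunction of \cite{CPR} lies in $\hc$. (Also, as stated, projecting the PDE onto a fixed spatial profile $\phi$ does not produce a closed Hill ODE for the coefficient; in \cite{CPR} the Hill equation arises from exact separation of variables for the interior Dirichlet problem on $\{|x|<L\}$ with potential $q(t)\chi^{\delta}\approx q(t)$, and the confinement comes from a barrier $b^{\eps}\to\infty$, not from a Lax--Phillips compression.)

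Second, your route to $\hc$-membership of the eigenvector --- the assertion that the Floquet mode is outgoing, behaves like $e^{-i\mu(t-|x|)}/|x|$, and hence decays exponentially --- is exactly the kind of claim that needs proof, and none is offered: nothing in the definition of an eigenfunction of the monodromy operator in the energy space gives a radiation condition or pointwise asymptotics for free. The paper proves no spatial decay at all; it resolves the $H$-versus-$\hc$ issue by a different mechanism. It first shows $V(T,0)$ is a compact perturbation of $U_0(T)$ on $\hc$, so its spectrum in $|z|>1$ consists of discrete eigenvalues; then, with $\varphi$ the eigenfunction of the interior propagator $K^{\delta}(T)$ extended by zero, it proves that $(\varphi,(W^{\eps}(T)-zI)^{-1}\varphi)_{\hc}$ converges, uniformly on a contour $\gamma$ around $z_1$, to $(\varphi,(K^{\delta}(T)-zI)^{-1}\varphi)_{\hc}$. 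The new ingredient that makes the $\hc$ (rather than $H$) pairing converge is the paper's Lemma~1: writing $\varphi_1=-\Delta\psi$ with $\psi=\frac{1}{4\pi|x|}\star\varphi_1\in H_D(\R^3)$ (Hardy--Littlewood--Sobolev), which converts the extra $L^2$ pairing into an $H_D$ pairing where the weak convergence of \cite{CPR} applies; the contour integral $\frac{1}{2\pi i}\oint_{\gamma}(W^{\eps}(T)-zI)^{-1}\varphi\,dz$ then pairs with $\varphi$ to give $\|\varphi\|^2_{\hc}\neq 0$ in the limit, so the Riesz projection is nonzero and $W^{\eps}(T)$ has an eigenvalue $y$, $|y|>1$, for small $\eps$. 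If you wished to salvage your decay route instead, the honest starting point is finite propagation speed: since the solution agrees with the free, energy-conserving evolution for $|x|>\rho+t$, an eigenfunction $w=(w_1,w_2)$ with eigenvalue $y$ satisfies
\begin{equation*}
\Bigl(\int_{|x|>\rho+nT}\bigl(|\nabla_x w_1|^2+|w_2|^2\bigr)\,dx\Bigr)^{1/2}\leq |y|^{-n}\,\|w\|_{0},
\end{equation*}
but one must then still upgrade these exponentially small gradient tails to $w_1\in L^2(\R^3)$, which is an additional argument, not a one-line consequence.
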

The above result has been established in \cite{CPR} for the Cauchy problem with initial data in $H = H_D(\R^3) \times L^2(\R^3)$.
In fact we show that the propagator of (\ref{eq:1.1})
$$V(T, 0): \hc \ni (f_1(x), f_2(x)) \longrightarrow (u(T, x), u_t(T, x)) \in \hc$$
has an eigenvalue $y, |y| > 1$ which implies (\ref{eq:1.2}).\\

 Our purpose is to show that adding a nonlinear perturbation to \eqref{eq:1.1} forbids the existence of solutions satisfying \eqref{eq:1.2}.
 Consider therefore the following Cauchy problem 
 \begin{equation}\label{eq:1.3}
\partial_t^2 u-\Delta_x u+q(t,x)u+|u|^r u=0,\quad u(0,x)=f_1(x),\,\, \partial_t u(0,x)= f_2(x)\,,
\end{equation}
where $2\leq r<4$. We have the following statement.  
 \begin{thm}\label{th2}
 For any choice of $q$ the Cauchy problem \eqref{eq:1.3} is globally well-posed in $\hc$. Moreover, for every 
 $(f_1, f_2)\in \hc$ there exists a constant $C > 0$ such that for every $t\in\R$, the solution of \eqref{eq:1.3} satisfies the polynomial bound 
 \begin{eqnarray*}
 \|\nabla u(t,\cdot)\|_{L^2(\R^3)}
 +
 \| \partial_t u(t,\cdot)\|_{L^2(\R^3)}
 \leq 2\Bigl(X(0)^{\frac{r}{r+2}}+C|t|\Bigr)^{\frac{r+2}{2r}},\\
\| u(t,\cdot)\|_{L^2(\R^3)}\leq \|f_1\|_{L^2} + 2|t|\Bigl(X(0)^{\frac{r}{r+2}}+C|t|\Bigr)^{\frac{r+2}{2r}}\,,
 \end{eqnarray*}
where
$$X(t)=\int_{\R^3} \big(\frac{1}{2}|\partial_t u|^2 + \frac{1}{2}|\nabla_x u|^2 + \frac{1}{2}q |u|^2 +\frac{1}{r+2} |u|^{r+2}\big)dx$$ 
and $C > 0$ depends only on $q$ and $r$.
 \end{thm}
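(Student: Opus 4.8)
The plan is to control the modified energy $X(t)$ through a differential inequality that, crucially, turns out to be \emph{sublinear} because of the defocusing term $\frac{1}{r+2}|u|^{r+2}$; this forces polynomial rather than exponential growth, and the resulting a priori bound upgrades a local solution to a global one.

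I would first record local well-posedness in $\hc$. Since $2\le r<4$, the nonlinearity $|u|^r u$ is energy-subcritical in dimension three (the energy-critical exponent being $r=4$), so a standard contraction argument based on the Strichartz estimates for the free wave equation, with both $qu$ and $|u|^r u$ treated by Duhamel's formula, produces a unique solution on a time interval whose length depends only on $\|(f_1,f_2)\|_{\hc}$. Because $\|\nabla u\|_{L^2}^2+\|\partial_t u\|_{L^2}^2\le 2X(t)$, any a priori bound on $X(t)$ prevents blow-up of the $\hc$-norm and hence yields global existence by the usual continuation argument. So everything reduces to bounding $X(t)$.

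Differentiating $X(t)$ and integrating by parts in the gradient term, all the terms assemble into $\int_{\R^3}\partial_t u\,(\partial_t^2u-\Delta_x u+qu+|u|^r u)\,dx$, which vanishes by \eqref{eq:1.3}; the only surviving contribution is the time derivative hitting the explicit $t$-dependence of $q$, giving the identity $\frac{d}{dt}X(t)=\frac12\int_{\R^3}\partial_t q(t,x)\,|u(t,x)|^2\,dx$. Now comes the decisive estimate. Since $q$ is supported in $\{|x|\le\rho\}$ and $\partial_t q$ is bounded (being smooth and $T$-periodic), Hölder's inequality on the \emph{bounded} set $\{|x|\le\rho\}$ with exponents $\frac{r+2}{2}$ and $\frac{r+2}{r}$ gives $\int_{|x|\le\rho}|u|^2\,dx\le C_\rho\big(\int_{\R^3}|u|^{r+2}\,dx\big)^{\frac{2}{r+2}}$, and since $q\ge 0$ makes every term of $X$ nonnegative we have $\int_{\R^3}|u|^{r+2}\,dx\le(r+2)X(t)$. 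Combining these yields $\big|\frac{d}{dt}X(t)\big|\le C\,X(t)^{\frac{2}{r+2}}$ with $C$ depending only on $q$ and $r$. This last step is the heart of the matter, and where the nonlinearity is indispensable: if one instead controlled $\int_{|x|\le\rho}|u|^2$ by the Sobolev embedding $H^1\hookrightarrow L^6$, one would only get $\big|\frac{d}{dt}X\big|\lesssim X$, i.e.\ the exponential growth that Theorem \ref{th1} shows is genuinely attained in the linear case. The point is that the exponent $\frac{2}{r+2}<1$ destroys the exponential mechanism.

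It then remains to integrate the sublinear inequality. Writing $\frac{d}{dt}\big(X^{\frac{r}{r+2}}\big)=\frac{r}{r+2}X^{-\frac{2}{r+2}}\frac{dX}{dt}$ and inserting the bound yields $X(t)^{\frac{r}{r+2}}\le X(0)^{\frac{r}{r+2}}+C|t|$, whence $X(t)\le\big(X(0)^{\frac{r}{r+2}}+C|t|\big)^{\frac{r+2}{r}}$; running the same computation for $t<0$ accounts for the absolute values. The first claimed inequality follows from $\|\nabla u\|_{L^2}+\|\partial_t u\|_{L^2}\le 2X(t)^{1/2}$, a consequence of Cauchy--Schwarz together with $\|\nabla u\|_{L^2}^2+\|\partial_t u\|_{L^2}^2\le 2X(t)$, and the second from integrating $\frac{d}{dt}\|u\|_{L^2}\le\|\partial_t u\|_{L^2}\le 2X(t)^{1/2}$ in time and bounding the monotone right-hand side by its value at the endpoint. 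I expect the only genuine obstacle to be the Hölder step and the realization that the defocusing energy, not the Sobolev embedding, is what must be used there; the rest is bookkeeping.
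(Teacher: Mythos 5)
Your overall strategy is the same as the paper's: the identity $X'(t)=\tfrac12\Re\int(\partial_t q)|u|^2\,dx$, the H\"older estimate on $\{|x|\le\rho\}$ trading $\|u\|_{L^2}^2$ for $X^{2/(r+2)}$ (using the defocusing term rather than Sobolev embedding, exactly as the paper does), and the integration of the sublinear differential inequality. However, there is a genuine gap at the step you dismiss as immediate: \emph{justifying the energy identity for solutions at the $\hc$ regularity level}. For data merely in $H^1(\R^3)\times L^2(\R^3)$, the solution satisfies $u\in C([0,A],H^1)\cap C^1([0,A],L^2)$, so $\Delta_x u$ and $\partial_t^2 u$ live only in $C([0,A],H^{-1})$, and the quantities $\int \Delta_x u\,\overline{\partial_t u}\,dx$ and $\int \partial_t^2 u\,\overline{\partial_t u}\,dx$ appearing in your ``integrate by parts and everything assembles'' computation are pairings of $H^{-1}$ against $L^2$, which are not defined. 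Your formal differentiation of $X(t)$ is legitimate only for solutions with $H^2\times H^1$ regularity (together with the Strichartz-type integrability needed to make sense of the nonlinear term).

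This is precisely where the paper spends the bulk of its effort. It first proves the identity (Lemma~\ref{llcc}) only for solutions in $C([0,A],H^2)\cap C^1([0,A],H^1)\cap L_t^{\frac{2r+2}{r-2}}L_x^{2r+2}$, using the propagation of $H^2\times H^1$ regularity established in Proposition~\ref{LWP}. It then approximates general $\hc$ data $f$ by $g_n\in H^2\times H^1$, and must show that the corresponding regular solutions $w_n$ converge to $u$; since continuous dependence on the data was not part of Proposition~\ref{LWP}, this convergence is proved by hand via the Strichartz estimate \eqref{eq:3.2}, on an interval $[0,\ep(a)]$ chosen small enough that the nonlinear difference term can be absorbed into the left-hand side. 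Only then can the identity \eqref{eq:3.10} be passed to the limit (in $\mathcal{D}'$), and the whole package (local existence, identity, ODE lemma, bound $B_a$) must be iterated interval by interval to reach an arbitrary time $a$, with a bootstrap ensuring the step size $\ep(a)$ never shrinks. Without this approximation-and-iteration scheme, your a priori bound on $X(t)$ rests on an unproven identity, and the global continuation argument has nothing to run on. (A minor additional point: when integrating $|X'|\le CX^{1-\gamma}$ you divide by $X^{2/(r+2)}$, which fails where $X$ vanishes; the paper's Lemma~\ref{ode} handles this by working with $X+\epsilon$ and letting $\epsilon\to0$.)
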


By global well-posedness we mean the existence, the uniqueness and the continuous dependence with respect to the data. 
The proof of Theorem 2 is based on the equality
\begin{equation} \label{eq:1.4}
 X'(t) = \frac{1}{2}\Re \int_{\R^3} (\partial_t q) |u|^2 dx
\end{equation}
and the estimate
$$|X'(t)| \leq C X^{1 - \frac{r}{r+2}}(t).$$

It is classical to expect that
the result of Theorem~\ref{th1} implies the instability of the zero solution of \eqref{eq:1.3}. More precisely, we have the following instability result. 
\begin{thm}\label{th3}
With $q$ as in Theorem~\ref{th1} the following holds true. There is $\eta>0$ such that for every $\delta>0$ there exists $(f_1,f_2)\in \hc$ , $\|(f_1,f_2)\|_{\hc}<\delta$ and there exists $n = n(\delta)>0$ 
such that the solution of \eqref{eq:1.3} satisfies 
$
\|(u(nT,\cdot),\partial_t u(nT,\cdot)\|_{\hc}>\eta.
$
\end{thm}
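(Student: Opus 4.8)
The plan is to pass to the discrete dynamics generated by the period maps and to compare the nonlinear flow with the linear one over the time scale on which the unstable linear mode grows to a fixed size. Let $V:=V(T,0)$ be the linear propagator and let $W\colon\hc\to\hc$, $W(g)=(u(T,\cdot),\partial_t u(T,\cdot))$, be the nonlinear period map associated with \eqref{eq:1.3}; by Theorem~\ref{th2} the map $W$ is globally defined, $W(0)=0$, and by $T$-periodicity of $q$ the solution at $t=nT$ is $W^n(g)$, while the linear solution is $V^n g$. Theorem~\ref{th1} provides an eigenvalue $y$ of $V$ with $|y|>1$; invoking that the spectrum of $V$ in $\{|z|>1\}$ is discrete one may take $y$ of maximal modulus $\lambda:=|y|$, so that the spectral radius of $V$ equals $\lambda$ and, for every $\epsilon>0$, $\|V^n\|\le M_\epsilon(\lambda+\epsilon)^n$. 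Fix $\epsilon$ with $\Lambda:=\lambda+\epsilon<\lambda^{\,3}$. Passing to the real (two–dimensional, if $y\notin\R$) invariant subspace attached to $y$ one obtains a unit eigenvector direction $w\in\hc$ and a constant $c_0>0$ with $c_0\lambda^n\le\|V^n w\|_{\hc}\le c_0^{-1}\lambda^n$ for all $n\ge0$.

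Next I would quantify the superlinear smallness of the period-map remainder. By the subcritical ($2\le r<4$) local theory in $\hc$, for $\|g\|_{\hc}\le\rho_0$ small the solution of \eqref{eq:1.3} exists on $[0,T]$ and Duhamel gives
\be
W(g)-Vg=-\int_0^T V(T,s)\bigl(0,\,|u|^r u\bigr)\,ds .
\ee
Estimating $|u|^r u$ in the dual Strichartz norm by $\|u\|^{r+1}$ and using the Strichartz bounds for $V(T,s)$ yields
\be
\|W(g)-Vg\|_{\hc}\le C\|g\|_{\hc}^{\,p},\qquad p:=r+1\ge 3,\quad \|g\|_\hc\le\rho_0 .
\ee

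The heart of the argument is then a bootstrap/escape scheme. Fix $\eta\in(0,\rho_0]$ with $C'\eta^{p-1}\le c_0/2$ (the constant $C'$ appearing below), start from $f=\delta w$ with $\delta$ arbitrarily small, and write $f_n=W^n(f)=V^n f+e_n$. Since $e_{n+1}=Ve_n+(W(f_n)-Vf_n)$ and $e_0=0$, as long as $\|f_k\|_\hc\le\eta$ one gets
\be
\|e_n\|_{\hc}\le C\sum_{k=0}^{n-1}\|V^{n-1-k}\|\,\|f_k\|_\hc^{\,p}\le C'\,(\delta\lambda^n)^p ,
\ee
the decisive point being that, because $\lambda^{p}>\Lambda$, the geometric sum $\sum_k\Lambda^{n-1-k}\lambda^{kp}$ is dominated by its top term and the factor $\Lambda$ cancels. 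Hence, while $\delta\lambda^n\le\eta$, one has $\|e_n\|\le\frac{c_0}{2}\delta\lambda^n$, so $\frac{c_0}{2}\delta\lambda^n\le\|f_n\|_\hc\le A\,\delta\lambda^n$ with a fixed $A$, and the bootstrap closes. Taking $n=n(\delta)$ to be the first integer with $\delta\lambda^n>\eta/(2A)$, all prior iterates stay below $\eta\le\rho_0$ (so all estimates are legitimate), while $\|f_{n(\delta)}\|_\hc\ge\frac{c_0}{2}\delta\lambda^{n(\delta)}\ge \eta_0$ for a fixed $\eta_0>0$ independent of $\delta$. As $\|f\|_\hc=\delta$ can be taken arbitrarily small, this is precisely the asserted instability, with $\eta_0$ in the role of $\eta$.

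The main obstacle is the third step: accumulating the nonlinear errors over the $n(\delta)\sim\log(1/\delta)$ periods without letting them overtake the exponentially growing unstable mode. This forces the two quantitative inputs to be compatible — a lower bound $c_0\lambda^n$ on the growth of the single linear mode together with an operator–norm bound $M_\epsilon\Lambda^n$ with $\Lambda<\lambda^{p}$ — and it is here that one must use that the part of $\sigma(V(T,0))$ outside the unit circle is discrete, so that $\lambda$ is the spectral radius and $\Lambda$ may be chosen arbitrarily close to it; the superlinearity $p=r+1>1$ coming from the defocusing term then does the rest.
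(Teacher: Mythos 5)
Your proposal is correct, and its first two steps coincide with the paper's: you reduce matters to the discrete system $w_{n+1}=\fc(w_n)$ generated by the nonlinear period map, and you establish the superlinear remainder bound $\|\fc(h)-Lh\|_{1}\leq C\|h\|_{1}^{r+1}$ for small $h$ by combining the Duhamel representation with a Strichartz-type bound on $\|u\|_{L^{r+1}([0,T];L^{2r+2}_x)}$ --- exactly the role played by Proposition~\ref{frechet} and the computation in Section~4 of the paper. Where you genuinely diverge is the final step: the paper treats ``linear instability $+$ superlinear remainder $\Rightarrow$ nonlinear instability'' as a black box, invoking the Rutman--Dalecki theorem in Henry's form (Theorem~5.1.5 of \cite{He}), which yields Theorem~\ref{th4} and hence Theorem~\ref{th3}; you instead prove this implication from scratch by a bootstrap/escape-time argument: seed the data along the unstable eigendirection, control the error $e_n$ through the sum $\sum_{k}V^{n-1-k}\bigl(\fc(f_k)-Vf_k\bigr)$, and exploit $\lambda^{r+1}>\lambda+\epsilon$ so that the geometric sum is dominated by its top term and the accumulated error remains a fixed fraction of the linear growth until the iterates reach a fixed size $\eta_0$. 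Your route is self-contained and makes the mechanism explicit, but note what it costs: you need the operator-norm growth rate $\Lambda=\lambda+\epsilon$ to sit just above the growth rate $\lambda$ of the seeded mode, and for that you must know the spectral radius of $V(T,0)$ is attained at an eigenvalue --- this is precisely where the discreteness of $\sigma(V(T,0))$ in $\{|z|>1\}$ (compact perturbation plus analytic Fredholm, Section~2) enters your argument, whereas the paper's citation only requires $r(L)>1$, which follows from the mere existence of one eigenvalue of modulus $>1$. Both routes deliver exponential growth at rate $r(L)$ during the escape window. The small bookkeeping issues in your write-up (the bootstrap constant $A$ must be fixed before $\eta$ is chosen, and the smallness requirement should read $A\eta\leq\rho_0$ rather than $\eta\leq\rho_0$, with a similar adjustment at the exit step $n(\delta)$ where $\delta\lambda^{n(\delta)}$ may exceed the threshold by a factor $\lambda$) are routine and do not affect correctness.
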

We are not aware of any nontrivial choice of $(f_1, f_2)\in \hc$ such that the solution $u(t, x)$ of \eqref{eq:1.3} and $u_t(t, x)$ remain uniformly bounded in $\hc$ for all $t \geq 0.$ 
The paper is organized as follows. In the next section, we prove Theorem~\ref{th1}. The third section is devoted to the proof of Theorem~\ref{th2}. First we obtain a local existence and uniqueness result on intervals $[s, s + \tau]$ with $\tau = c(1 + \|(f_1, f_2)\|_{\hc})^{-\gamma}$ with constants $c > 0$ and $\gamma > 0$ independent on $f$. Next we establish (\ref{eq:1.4}) for solutions 
$$u(t, x) \in  C([0, A], H^{2}(\R^3)) \cap C^1([0, A], H^1(\R^3)) \cap L_t^{\frac{2r +2}{r-2}}([0, A], L^{2r +2}_x(\R^3))$$
and finally, by a local approximation in small intervals we justify (\ref{eq:1.4}) for every fixed $A > 0$ and $0 \leq t \leq A$.
In the fourth section, we prove Theorem~\ref{th3} passing to a system
$$w_{n+1} = \fc(w_n),\: n \geq 0,$$
where $\fc = \uc(0, T)$ is the propagator of the nonlinear equation. In the fifth section we discuss the generalizations concerning the nonlinear equations
$$\partial_t^2 u - \Delta_x u + |u|^r u +\sum_{j = 0}^{r-1} q_j(t, x)|u|^j u = 0, \:r = 2,3$$
with time-periodic functions $q_j(t + T_j, x) = q_j(t, x) \geq 0,\: j = 0,1,r-1$ having compact support with respect to $x$.
\section{Proof of Theorem~\ref{th1}}
\subsection{The linear wave equation with time periodic potential}
Let $H_D(\R^3)$  be the closure of $C_0^{\infty}(\R^3)$ with respect to the norm
$\|f\|_{H_D} = \|\nabla_x f\|_{L^2(\R^3)}$. Define the (energy) space 
$$
H = H_D(\R^3) \times L^2(\R^3)
$$ 
with norm
$$
\|f\|_0 =\big(\|f_1\|^2_{H_D} + \|f_2\|^2_{L^2}\big)^{1/2},\quad f = (f_1,f_2).
$$
Let $u(t, x; s)$ be the solution of the Cauchy problem
\begin{equation} \label{eq:2.1}
 \partial_t^2 u - \Delta_x u + q(t, x) u= 0,\: u(s, x) = f_1(x), \: \partial_t u(s, x) = f_2(x)
\end{equation}
 with $f = (f_1, f_2) \in H$.  Therefore
the operator
$$
H \ni f \rightarrow U(t, s)f = (u(t, x; s),\partial_t u(t, x; s)) \in H
$$
is called the propagator (monodromy operator) of (\ref{eq:2.1}) and there exist $C > 0$ and $\alpha \geq 0$ so that
\begin{equation} \label{eq:2.2}
\|U(t, s) f\|_{0} \leq C e^{\alpha|t- s|}\|f\|_{0}.
\end{equation}
Let $U_0(t-s)f = (u_0(t, x; s), \partial_t u_0(t, x; s))$, where $u_0$  solves $ \partial_t^2 u_0 - \Delta_x u_0 = 0$ with initial data $f$ for $t = s.$ Then we have 
\begin{equation} \label{eq:2.3}
U(t, s)f - U_0(t- s)f= - \int_s^t U_0(t- \tau)Q(\tau) U(\tau, s)f d\tau, 
\end{equation}
where 
$$
U_0(t)=
\begin{pmatrix}  \cos(t\sqrt{-\Delta}) &  \frac{\sin(t\sqrt{-\Delta})}{\sqrt{-\Delta}} \\
- \sqrt{-\Delta}  \sin(t\sqrt{-\Delta})  & \cos(t\sqrt{-\Delta}) \end{pmatrix},
\quad
Q(t) = \begin{pmatrix} 0 & 0\\
q(t, x) & 0\end{pmatrix}.$$
Using the relation \eqref{eq:2.3} and the compact support of $q$ allows us to obtain the estimate  
$$
\|U(t, s)f - U_0(t- s)f\|_{H^2(\R^3) \times H^1(\R^3)}\leq C \|U(t, s) f\|_{0}\,.
$$
Moreover the support property of $q$ also yields 
$${\rm supp}_x\: (
U(t, s)f - U_0(t- s)f
) \subset \{|x| \leq \rho +|t-s|\}\,.
$$
Consequently $U(t,s)$ is a {\it compact perturbation} of the unitary operator $U_0(t-s).$

Now consider the space $\hc = H^1(\R^3) \times L^2(\R^3) \subset H$ with norm
$$\|f\|_1 = \big (\|f_1\|^2_{H^1(\R^3)} + \|f_2\|^2_{L^2(\R^3)}\big)^{1/2},\quad
\|f_1\|^2_{H^1(\R^3)} = \|\nabla_x f_1\|^2_{L^2(\R^3)} + \|f_1\|^2_{L^2(\R^3)}.$$
The map $U_0(t)$ {\it  is not unitary} in $\hc.$  However, one easily checks that

$$\|U_0(t) f\|_1 \leq C(1+|t|) \|f\|_1,\quad \forall \, t \in \R,$$ 
with a constant $C > 0$ independent of $t$.
Consequently, the spectral radius of the operator $U_0(T): \hc \to \hc$ is not greater than 1.

By using (\ref{eq:2.3}), it is easy to show by a fixed point theorem that for small $t_0 >0$ and $s \leq t \leq s +t_0$ we have a local solution 
$
(v(t, x; s) , \partial_t v(t, x; s)) \in \hc
$
 of the Cauchy problem (\ref{eq:2.1}) with initial data $f \in \hc.$ For this solution
 one deduces
$$\frac{d}{dt}\int_{\R^3} 
 \big( |\partial_t v(t, x; s)|^2 + |\nabla_x v(t, x; s)|^2 + |v(t, x; s)|^2\big)dx   = - 2 \Re\int_{\R^3} q v\overline{\partial_t v} dx +2\Re\int_{\R^3} v\overline{\partial_t v}dx $$
which yields
$$\frac{d}{dt} \|(v(t,x;s), \partial_t v(t, x; s))\|^2_1 \leq C_1 \|(v(t,x; s), \partial_t v(t, x; s))\|^2_1$$
with a constant $C_1 > 0$ independent of $f$ and $s$. 
The last inequality implies an estimate
$$\|(v(t,x; s), \partial_t v(t, x; s))\|_1 \leq C_2 e^{\beta |t- s|} \|f\|_1,\quad s \leq t \leq s +t_0,\, \beta \geq 0.$$
By a standard argument this leads to a global existence of a solution of (\ref{eq:2.1}).
Introduce the propagator 
$$\hc \ni f \mapsto V(t, s)f = (v(t, x; s),\partial_t v(t, x; s)) \in \hc$$
corresponding to the Cauchy problem (\ref{eq:1.1}) with initial data $f \in \hc.$ As in Section 5 in \cite{P} it is easy to see that
we have the following properties
$$
U(t, s) \circ U(s, r) = U(t, r),\,\,  U(s, s) = {\rm Id},\,\, U(t + T, s + T) = U(t, s),\quad  t, s, r \in \R.
$$
The same properties hold for the propagator $V(t, s)$.  In particular, $V(T, 0) = V((k+1)T, kT)$,  $k \in \N$ and $V(nT, 0) = (V(T, 0))^n.$

As above notice  that $V(t, s) - U_0(t-s)$ is a compact operator in ${\mathcal L}(\hc)$. For $|z| \gg 1$ we have
$$(V(T, 0) - z I)^{-1} = (U_0(T) - z I)^{-1} - (U_0(T) - zI)^{-1}\big(V(T, 0) - U_0(T)\big) (V(T, 0) - zI)^{-1}$$ 
hence
$$ \big[ I + (U_0(T) - z I)^{-1} \big(V(T, 0) - U_0(T)\big)\big](V(T, 0) - zI)^{-1} = (U_0(T) - z I)^{-1}.$$
Set $K(z) = I + (U_0(T) - z I)^{-1} \big(V(T, 0) - U_0(T)\big)$. For $|z|$ large enough $K(z)$ is invertible. 
By the analytic Fredholm theorem for $|z| \geq 1 + \delta > 1$  the operator $K(z)$ is invertible outside a
discreet set and the inverse $K(z)^{-1}$ is a meromorphic operator-valued function. Consequently, the operator $V(T, 0) \in {\mathcal L}(\hc)$ has in the open domain $|z| > 1$ a discreet set of eigenvalues with finite multiplicities which could accumulate only to  the circle $|z| = 1$.
\subsection{Extending the result of \cite{CPR} to $\hc$}
In \cite{CPR} it was proved that there are potentials $q(t, x) \geq 0$ for which the operator $U(T, 0): H \to H$ has an eigenvalue $z, |z| > 1$. In this paper we deal with the operator $V(T, 0): \hc \to \hc$ and  it is not clear if the eigenfunction $\psi \in H$ with eigenvalues $z$ constructed in \cite{CPR} belongs to $\hc.$

Below we make some modifications on the argument of \cite{CPR} in order to show that for  the potential constructed in \cite{CPR} the corresponding operator $V(T, 0): \hc \to \hc$ has an eigenvalue $y, |y| > 1.$ For convenience we will use the notations in \cite{CPR} and we recall some of them. The potential in \cite{CPR} has the form
$V^{\eps}(t, x) := b^{\eps}(x) + q(t)\chi^{\delta}(x)$
with $\eps > 0,$ where $b^{\eps}(x) \in C_0^{\infty}(\R^3)$ is
supported in $\{0 < L \leq |x| \leq L + 1\}$
and equal to $1/\eps$ for $\{L + \eps \leq |x| \leq L + 1 - \eps\}$, $\chi^{\delta}(x) \geq 0$ is a smooth function with support
in $|x| < L$ and equal to 1 for $|x| \leq L - \delta < L.$ 
Finally, $q(t) \geq 0$ is a periodic smooth function with period $T > 0$. 
The number $L$ is related to the interval of instability of the Hill operator associated with $q(t)$.
The number $\delta >0$ is fixed sufficiently small and the propagator $K^{\delta}(T)$ related to the equation
$$\partial_t^2 u - \Delta_x u + q(t)\chi^{\delta}(x) u = 0, \: t \geq 0,\: |x| < L$$
with Dirichlet boundary conditions on $|x| = L$ has an eigenvalue $z_1, |z_1| > 1$ with eigenfunction $\varphi \in H^1_0(|x| \leq L)$, that is $K^{\delta}(T)\varphi = z_1\varphi.$
Let $S^{\eps}(T): H \to H$ be the propagator corresponding to the Cauchy problem for the equation
$$\partial_t^2 u - \Delta_x u + V^{\eps}(t, x) u = 0,\: t \geq 0, \: x\in \R^3$$
and let $W^{\eps}(T): \hc \to \hc$ be the propagator for the same problem with initial data in $\hc.$ The problem is to show that
for $\eps > 0$ sufficiently small $W^{\eps}(T)$ has an eigenvalues $y, |y|>1$ (Here $S^{\eps}(T), W^{\eps}(T)$ correspond to our notations $U(T, 0), V(T, 0)$ and these operators have domains $H$ and $\hc$, respectively). \\

Extend $\varphi$ as 0 outside $|x| \geq L$ and denote the new function $\varphi \in \hc$ again by $\varphi.$ Let 
$$\gamma = \{z \in \C: |z - z_1| = \eta > 0\} \subset \{z: |z| > 1\}$$
be a circle with center $z_1$ such that $K^{\delta}(T) - zI$ is analytic on $\gamma$ and $z_1$ is the only eigenvalue of $K^{\delta}(T)$ in $|z- z_1| \leq \eta.$  If $W^{\eps}(T)$ has an eigenvalues on $\gamma$ the problem is solved. Assume that $W^{\eps}(T)$ has no eigenvalues on $\gamma$. It is easy to see that
$$(W^{\eps}(T) - zI)^{-1} \varphi = (S^{\eps}(T) - z I)^{-1}\varphi \in \hc,\: z \in \gamma.$$
Indeed,
$$(W^{\eps}(T) - zI)^{-1} \varphi = (S^{\eps}(T) - z I)^{-1}\varphi  + (S^{\eps}(T) - z I)^{-1}(S^{\eps}(T)- W^{\eps}(T))(W^{\eps}(T) - zI)^{-1} \varphi$$
and
$$(S^{\eps}(T)- W^{\eps}(T))(W^{\eps}(T) - zI)^{-1} \varphi = 0.$$
Our purpose is to study
$$(\varphi, (W^{\eps}(T) - zI)^{-1} \varphi)_{\hc} = (\varphi, (S^{\eps}(T) - zI)^{-1}\varphi)_{\hc},$$
where $(.,,)_{\hc}$ denotes the scalar product in $\hc$ and $(., .)_H$ denotes the scalar product in $H$.  It  was proved in \cite{CPR} that for $ z \in \gamma$ one has the weak convergence in $H$
$$(S^{\eps}(T) - zI)^{-1}\varphi \rightharpoonup_{\eps\to 0} (K^{\delta}(T) - zI)^{-1}\varphi,$$
so
$$(\varphi, (S^{\eps}(T) - zI)^{-1}\varphi)_H \longrightarrow (\varphi, (K^{\delta}(T) - zI)^{-1}\varphi)_H. $$
Here we have used the fact that $\varphi = 0$ for $|x| >L.$ Let $\varphi = (\varphi_1, \varphi_2).$
We claim that as $\eps \to 0$ we have
\begin{equation} \label{eq:2.4}
(\varphi_1, ((S^{\eps}(T) - zI)^{-1}\varphi)_1)_{L^2} \longrightarrow (\varphi_1, ((K^{\delta}(T) - zI)^{-1}\varphi)_1)_{L^2}.
\end{equation}
To prove this write
$$\varphi_1 =-\Delta \psi\: {\rm with}\:\psi =\Bigl(\frac{1}{4\pi |x|} \star \varphi_1\Bigr).$$
The main point is the following
\begin{lem} We have $\psi \in H_D(\R^3).$
\end{lem}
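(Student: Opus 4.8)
The plan is to verify the two defining requirements for membership in $H_D(\R^3)$ directly: that $\nabla\psi\in L^2(\R^3)$, and that $\psi$ lies in the closure of $C_0^\infty(\R^3)$ for the norm $\|\cdot\|_{H_D}$. The essential structural input is that $\varphi_1$, being the first component of the eigenfunction $\varphi\in\hc$ extended by $0$ outside $|x|\leq L$, belongs to $L^2(\R^3)$ and has compact support. In particular, by Cauchy--Schwarz $\varphi_1\in L^1(\R^3)$ as well, so its Fourier transform $\widehat{\varphi_1}$ is simultaneously square integrable and bounded, being in fact continuous up to $\xi=0$ with $|\widehat{\varphi_1}(\xi)|\leq\|\varphi_1\|_{L^1}$.

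First I would establish $\nabla\psi\in L^2$ on the Fourier side. Since $-\Delta\psi=\varphi_1$ we have $\widehat{\psi}(\xi)=|\xi|^{-2}\widehat{\varphi_1}(\xi)$, whence
\be
\|\nabla\psi\|_{L^2}^2=\int_{\R^3}\frac{|\widehat{\varphi_1}(\xi)|^2}{|\xi|^2}\,d\xi .
\ee
I would split this integral at $|\xi|=1$. On $\{|\xi|\geq 1\}$ the factor $|\xi|^{-2}$ is bounded by $1$, so that contribution is dominated by $\|\varphi_1\|_{L^2}^2$. On $\{|\xi|\leq 1\}$ I would use the bound $|\widehat{\varphi_1}(\xi)|\leq\|\varphi_1\|_{L^1}$ together with the fact that $|\xi|^{-2}$ is integrable near the origin in three dimensions (in polar coordinates $\int_0^1 r^{-2}r^2\,dr<\infty$), which bounds that contribution by $C\|\varphi_1\|_{L^1}^2$. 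Hence $\nabla\psi\in L^2(\R^3)$.

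The delicate point, and the reason the lemma is singled out as the main point, is precisely this low-frequency behaviour: the operator $(-\Delta)^{-1}$ is \emph{not} bounded from $L^2(\R^3)$ into the homogeneous space $H_D(\R^3)$, and the singularity $|\xi|^{-2}$ at $\xi=0$ would be fatal for a generic $L^2$ datum. It is exactly the compact support of $\varphi_1$, through the resulting boundedness (indeed continuity) of $\widehat{\varphi_1}$ at the origin, that makes the low-frequency integral converge. I expect this to be the heart of the argument, the remaining steps being routine.

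It then remains to place $\psi$ in the closure of $C_0^\infty$. Here I would exploit that $\psi$ is the Newtonian potential of a compactly supported $L^2$ function: for $|x|\geq 2L$ one has $|x-y|\geq |x|/2$ on the support of $\varphi_1$, giving $|\psi(x)|\leq\|\varphi_1\|_{L^1}/(2\pi|x|)$, so $\psi(x)=O(|x|^{-1})$; meanwhile $-\Delta\psi=\varphi_1\in L^2$ yields $\psi\in H^2_{\rm loc}$, hence $\psi$ is continuous and, combining the local boundedness with the decay at infinity, $\psi\in L^6(\R^3)$. A standard truncation $\psi_R=\chi(x/R)\psi$ with $\chi\in C_0^\infty$, $\chi\equiv1$ near the origin, followed by mollification, then approximates $\psi$ in $\|\cdot\|_{H_D}$: the term $\chi(\cdot/R)\nabla\psi$ converges to $\nabla\psi$ in $L^2$ by dominated convergence, while the commutator term $\psi\,\nabla\chi(\cdot/R)$ is supported in $\{R\leq|x|\leq 2R\}$ and, by Hölder using that $\|\nabla\chi(\cdot/R)\|_{L^3}$ is scale invariant, is bounded by $C\,\|\psi\|_{L^6(\{R\leq|x|\leq 2R\})}$, which tends to $0$ as $R\to\infty$ by the decay of the $L^6$-tail of $\psi$. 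This exhibits $\psi$ as a limit of $C_0^\infty$ functions in $H_D$, so $\psi\in H_D(\R^3)$.
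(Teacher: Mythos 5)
Your proof is correct, but it follows a genuinely different route from the paper's. The paper stays entirely in physical space: starting from the pointwise bound $|\partial_{x_j}\psi(x)| \leq \frac{1}{4\pi}\int_{\R^3}|\varphi_1(y)|\,|x-y|^{-2}\,dy$, it invokes the Hardy--Littlewood--Sobolev inequality (Theorem~4.3 of Lieb--Loss with $n=3$, $\lambda=2$, $r=2$, $p=6/5$) to obtain $\|\partial_{x_j}\psi\|_{L^2(\R^3)} \leq C\|\varphi_1\|_{L^{6/5}(\R^3)}$, and then uses the compact support of $\varphi_1$ together with H\"older to bound $\|\varphi_1\|_{L^{6/5}}$ by $\|\varphi_1\|_{L^2}$. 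You work instead on the Fourier side, splitting at $|\xi|=1$ and exploiting the compact support through $\varphi_1\in L^1(\R^3)$, hence $\widehat{\varphi_1}\in L^\infty$, to tame the infrared singularity $|\xi|^{-2}$; your diagnosis that the low-frequency behaviour is the whole point of the lemma is exactly right, and both proofs use compact support for precisely this purpose. Your argument is arguably more elementary, needing only Plancherel and Cauchy--Schwarz rather than HLS. You also settle a point the paper passes over in silence: the paper's proof establishes only $\nabla\psi\in L^2$, whereas membership in $H_D(\R^3)$, defined as the closure of $C_0^\infty(\R^3)$ in the $\|\nabla\cdot\|_{L^2}$ norm, requires in addition ruling out obstructions such as additive constants; your verification that $\psi\in L^6(\R^3)$ (via the $O(|x|^{-1})$ decay of the Newtonian potential) combined with the scale-invariant cutoff argument supplies exactly this, so on that score your proof is the more complete of the two.
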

\begin{proof} Since
$$|\partial_{x_j} \psi(x)| = \Bigl| \frac{1}{4 \pi} \int_{\R^3} \frac{(x_j - y_j) \varphi_1(y)}{|x-y|^3}dy \Bigr| \leq  \frac{1}{4 \pi} \int_{\R^3} \frac{ |\varphi_1(y)|}{|x-y|^2} dy ,$$
we can apply the Hardy-Littlewood-Sobolev inequality. More precisely, by using Theorem~4.3 of \cite{LL} with $n = 3,\: \lambda = 2,\:r= 2,\: p= 6/5$, we obtain that
$$\|\partial_{x_j}\psi(x)\|_{L^2(\R^3)} \leq C \|\varphi_1(x)\|_{L^{6/5}(\R^3)}.$$
Now using that $\varphi_1(x)$  is with compact support and the H\"older inequality, we obtain that
$$\|\varphi_1(x)\|_{L^{6/5}(\R^3)} \leq C_1 \|\varphi_1(x)\|_{L^2(\R^3)}.$$
This completes the proof of Lemma 1. 
\end{proof}

Therefore
$$(-\Delta \psi, ((S^{\eps}(T) - zI)^{-1}\varphi)_1)_{L^2} = \Bigl(\langle\nabla_x \psi, \nabla_x ((S^{\eps}(T) - zI)^{-1}\varphi))_1\rangle\Bigr)_{L^2}$$
$$\longrightarrow_{\eps \to 0} \Bigl(\langle\nabla_x \psi, \nabla_x((K^{\delta}(T) - zI)^{-1}\varphi))_1\rangle\Bigr)_{L^2} = (-\Delta \psi, ((K^{\delta}(T) - zI)^{-1}\varphi))_1)_{L^2}$$
which proves the claim (2.4). Consequently,
\begin{equation} \label{eq:2.5}
(\varphi, (W^{\eps}(T) - zI)^{-1}\varphi)_{\hc} \longrightarrow (\varphi, (K^{\delta}(T) - zI)^{-1}\varphi)_{\hc}.
\end{equation}
Moreover, Proposition~4.2 in  \cite{CPR} says that with a constant $C_0 > 0$ we have uniformly for $z \in \gamma$ the norm estimate
$$\|(S^{\eps}(T) - zI)^{-1}\|_{H} \leq C_0,\: \forall \eps\in ]0,\eps_0].$$
Since 
$$\|(S^{\eps}(T) - zI)^{-1}\varphi\|_{L^2(|x| \leq L)} \leq C_1 \|(S^{\eps}(T) - zI)^{-1}\varphi\|_H,$$
the sequence $(\varphi, (W^{\eps}(T)- zI)^{-1} \varphi)_{\hc}$ is bounded for $ z \in \gamma$.
Repeating the argument of Section~5 in \cite{CPR}, one deduces
$$\Bigl(\varphi,\frac{1}{2\pi i}\int_{\gamma} (W^{\eps}(T) - zI)^{-1}\varphi dz\Bigr)_{\hc}\longrightarrow \Bigr(\varphi, \frac{1}{2 \pi i}\int_{\gamma}(K^{\delta}(T) - zI)^{-1}\varphi dz\Bigr)_{\hc} = \|\varphi\|^2_{\hc} \neq 0.$$
This completes the proof that  for small $\eps$ the operator $W^{\eps}(T)$ has an eigenvalue $y, |y| > 1.$
\section{Proof of Theorem~\ref{th2}}
\subsection{Local well-posedness}
Consider the linear problem 
\begin{equation} \label{eq:3.1}
 \partial_t^2 u - \Delta_x u + q(t, x) u= F,\: u(s, x) = f_1(x), \: \partial_t u(s, x) = f_2(x).
\end{equation}
By using the argument in  \cite{P}, one may show that the solution of \eqref{eq:3.1} satisfies the same {\it local in time}  Strichartz estimates as in the case $q=0$. Notice that for these local Strichartz estimates we don't need a global control of the
local energy and we can establish them without a condition on the cut-off resolvent $\varphi (V(T, 0) - z)^{-1}\varphi$.
More precisely for every finite $a > 0$ and $f = (f_1, f_2) \in \hc, F \in L^1([s,s+a];L^2(\R^3))$ we have that the solution of \eqref{eq:3.1} satisfies 
\begin{equation}\label{eq:3.2}
\|(u,\partial_t u)\|_{C([s,s+a];\hc)}+\|u\|_{L^p_t([s, s+a], L^q_x(\R^3))}\leq C(a)
\big(\|(f_1,f_2)\|_{\hc}+\|F\|_{L^1([s,s+a];L^2(\R^3))}\big)\,,
\end{equation}
provided $\frac{1}{p}+\frac{3}{q}=\frac{1}{2}$, $p>2$ (the constant $C(a)$ in \eqref{eq:3.2} depends on $a$, $p$ and $q(t,x)$). 
Moreover, if $(f_1, f_2) \in H^2(\R^3) \times H^1(\R^3)$ and $F \in L^1([s,s+a];H^1(\R^3))$, we have
\begin{multline}\label{eq:3.3}
\|(u,\partial_t u)\|_{C([s,s+a];H^2\times H^1)}+\|\nabla_x u\|_{L^p_t([s, s+a], L^q_x(\R^3))}
\\
\leq C(a)
\big(\|(f_1,f_2)\|_{H^2\times H^1}+\|F\|_{L^1([s,s+a];H^1(\R^3))}\big)\,.
\end{multline}
A standard application of \eqref{eq:3.2}, \eqref{eq:3.3} is the following local well-posedness result for the nonlinear wave equation
\begin{equation} \label{eq:3.4}
 \partial_t^2 u - \Delta_x u + q(t, x) u+|u|^r u= 0,\: u(s, x) = f_1(x), \: \partial_t u(s, x) = f_2(x),\quad 2\leq r<4.
\end{equation}
\begin{prop}\label{LWP}
There exist $C>0$, $c>0$ and $\gamma>0$ such that for every $(f_1,f_2)\in \hc$ there is a unique solution 
$(u, \partial_t u) \in C([s, s +\tau], H^1(\R^3) \times L^2(\R^3))$ of \eqref{eq:3.4} on $[s,s+\tau]$ with $\tau=c(1+\|(f_1,f_2)\|_{\hc})^{-\gamma}$. 
Moreover, the solution satisfies 
\begin{equation}\label{eq:3.5}
\|(u,\partial_t u)\|_{C([s,s+\tau];\hc)}+\|u\|_{L^{\frac{2r+2}{r-2}}_t([s, s+\tau], L^{2r+2}_x(\R^3))}\leq C \|(f_1,f_2)\|_{\hc}\,.
\end{equation}
If in addition $(f_1,f_2)\in H^2(\R^3)\times H^1(\R^3)$, then $(u,\partial_t u)\in C([s,s+\tau];H^2(\R^3)\times H^1(\R^3))$. 
\end{prop}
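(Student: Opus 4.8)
The plan is to solve \eqref{eq:3.4} by a contraction mapping argument built on the Duhamel representation and the local Strichartz estimate \eqref{eq:3.2}. I fix once and for all $a=1$ in \eqref{eq:3.2}, so that its constant becomes a fixed number depending only on $p,q,r$, and I work with the exponents $p_0=\frac{2r+2}{r-2}$ and $q_0=2r+2$. A direct check gives $\frac{1}{p_0}+\frac{3}{q_0}=\frac{r+1}{2r+2}=\frac12$, so $(p_0,q_0)$ is admissible; for $2<r<4$ one has $p_0\in(2,\infty)$ and \eqref{eq:3.2} applies verbatim, whereas at the endpoint $r=2$ one has $p_0=\infty$, $q_0=6$, and the norm $\|u\|_{L^\infty_tL^6_x}$ is controlled by $\|u\|_{C([s,s+\tau];\hc)}$ through the Sobolev embedding $H^1(\R^3)\hookrightarrow L^6(\R^3)$. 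Let $X_\tau$ be the space of $u$ with $(u,\partial_t u)\in C([s,s+\tau];\hc)$ and $u\in L^{p_0}_t([s,s+\tau],L^{q_0}_x)$, normed by the sum of these two norms, and let $\Phi(u)$ be the solution of the linear problem \eqref{eq:3.1} with forcing $F=-|u|^ru$ and data $(f_1,f_2)$ at $t=s$. A fixed point of $\Phi$ is a solution of \eqref{eq:3.4}.

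The heart of the matter is a pair of nonlinear estimates that gain a positive power of $\tau$. Since $\||u|^ru\|_{L^2_x}=\|u\|_{L^{2r+2}_x}^{r+1}$, Hölder in time with exponent $p_0/(r+1)$, which is $\geq 1$ exactly because $r\leq 4$, yields
\[
\||u|^ru\|_{L^1([s,s+\tau];L^2_x)}\leq \tau^{\frac{4-r}{2}}\,\|u\|_{L^{p_0}_tL^{q_0}_x}^{r+1},
\]
the exponent of $\tau$ being $1-\frac{r+1}{p_0}=\frac{4-r}{2}>0$. In the same way, from the pointwise bound $\big||u|^ru-|v|^rv\big|\leq C(|u|^r+|v|^r)|u-v|$, Hölder in space (using $\frac{r}{2r+2}+\frac{1}{2r+2}=\frac12$) and then in time give
\[
\||u|^ru-|v|^rv\|_{L^1([s,s+\tau];L^2_x)}\leq C\,\tau^{\frac{4-r}{2}}\big(\|u\|_{L^{p_0}_tL^{q_0}_x}^{r}+\|v\|_{L^{p_0}_tL^{q_0}_x}^{r}\big)\|u-v\|_{L^{p_0}_tL^{q_0}_x}.
\]

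Feeding these into \eqref{eq:3.2} shows that, with $R:=2C\|(f_1,f_2)\|_\hc$, the map $\Phi$ sends the ball $\{u\in X_\tau:\|u\|_{X_\tau}\leq R\}$ into itself and is a contraction on it provided $2C\,\tau^{(4-r)/2}R^{r}\leq\frac12$. Solving this for $\tau$ forces precisely $\tau=c\,(1+\|(f_1,f_2)\|_\hc)^{-\gamma}$ with $\gamma=\frac{2r}{4-r}$ (one takes $c\leq 1$ to keep $\tau\leq a=1$), which is the announced lifespan. The Banach fixed point theorem then produces a unique solution in the ball together with the bound \eqref{eq:3.5}; uniqueness in the full class $C([s,s+\tau];\hc)$ follows since any such solution satisfies the Duhamel identity and hence, by \eqref{eq:3.2} on a possibly shorter interval, automatically lies in $X_\tau$ and in the ball, while continuous dependence is read off from the same difference estimate.

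For the persistence of regularity I rerun the fixed-point argument in the higher-regularity space defined through \eqref{eq:3.3} and identify the resulting solution with $u$ by the uniqueness just established. The one extra ingredient is $|\nabla(|u|^ru)|\leq C|u|^r|\nabla u|$, which with the identical Hölder bookkeeping gives $\|\nabla(|u|^ru)\|_{L^1([s,s+\tau];L^2_x)}\leq C\tau^{(4-r)/2}\|u\|_{X_\tau}^{r}\|\nabla u\|_{L^{p_0}_tL^{q_0}_x}$; since the low norm $\|u\|_{X_\tau}\leq R$ is already controlled and $\tau$ is small, the high norm closes by absorption and places $(u,\partial_t u)$ in $C([s,s+\tau];H^2\times H^1)$. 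I expect the only genuinely delicate point to be securing the gain $\tau^{(4-r)/2}$: it is available precisely because $r<4$ (subcriticality), and it is exactly what allows $\tau$ to be chosen as a function of $\|(f_1,f_2)\|_\hc$ alone; the endpoint $r=2$, where $p_0=\infty$, must be treated through the energy estimate and Sobolev embedding rather than through dispersion.
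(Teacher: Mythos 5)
Your existence argument is, in substance, identical to the paper's: the paper runs the Picard iteration $u_0=0$, with $u_{n+1}$ solving the linear problem with forcing $-|u_n|^ru_n$, using the same exponents as your $(p_0,q_0)$ (see \eqref{eq:3.7}), the same H\"older-in-time gain $\tau^{\frac{4-r}{2}}$, the same smallness condition $\tau^{\frac{4-r}{2}}(2C)^{r+1}\|(f_1,f_2)\|_{\hc}^r\leq 1$ (hence the same $\gamma=\frac{2r}{4-r}$), the same difference estimate via $\bigl||v|^rv-|w|^rw\bigr|\leq D_r|v-w|\bigl(|v|^r+|w|^r\bigr)$, the same Sobolev-embedding shortcut at $r=2$, and the same absorption scheme for the $H^2\times H^1$ norm based on $|\nabla_x(|u|^ru)|\leq C_r|u|^r|\nabla_x u|$. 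So on existence, the bound \eqref{eq:3.5}, and persistence of regularity, your proof and the paper's coincide up to the cosmetic difference between an explicit iteration and the Banach fixed point theorem.

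The one step that does not hold as stated is your claim of uniqueness in the full class $C([s,s+\tau];\hc)$. To run that argument you must apply \eqref{eq:3.2} to the Duhamel identity of an arbitrary solution $u$ with $(u,\partial_t u)\in C([s,s+\tau];\hc)$, and this requires $|u|^ru\in L^1_tL^2_x$; but for such $u$ the Sobolev embedding only gives $u(t)\in L^6(\R^3)$, hence $|u(t)|^ru(t)\in L^{\frac{6}{r+1}}(\R^3)$, and $\frac{6}{r+1}<2$ as soon as $r>2$. So for $2<r<4$ you cannot conclude that an energy-class solution automatically lies in $X_\tau$, and unconditional uniqueness does not follow from this reasoning (it is in fact a delicate question). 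What the contraction yields is uniqueness in the Strichartz class, i.e.\ among solutions satisfying additionally $u\in L^{\frac{2r+2}{r-2}}_t([s,s+\tau],L^{2r+2}_x(\R^3))$; this is how the proposition should be read, and it is also all that the paper's own proof establishes, since the paper never addresses uniqueness beyond the convergence of the iteration. At $r=2$ your remark is fine, because then $\frac{6}{r+1}=2$ and the forcing is controlled directly by the energy norm.
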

\begin{rem}
In the case $r=2$ the Strichartz estimates are not needed because one may only rely on the Sobolev embedding $H^1(\R^3)\hookrightarrow  L^6(\R^3)$. 
\end{rem}
Let us recall the main step in the proof of Proposition~\ref{LWP}. One may construct the solutions as the limit of the sequence $(u_n)_{n\geq 0}$, 
where $u_0=0$ and $u_{n+1}$ solves the linear problem 
\begin{equation} \label{eq:3.6}
 \partial_t^2 u_{n+1} - \Delta u_{n+1} + q(t, x) u_{n+1}+|u_n|^r u_n= 0,\: u(s, x) = f_1(x), \: \partial_t u(s, x) = f_2(x),
\end{equation}
where $t\in [s,s+\tau]$.  Set 
$$
\|u\|_{S}:=\|(u,\partial_t u)\|_{C([s,s+\tau];\hc)}+\|u\|_{L^{\frac{2r+2}{r-2}}_t([s, s+\tau], L^{2r+2}_x(\R^3))}\,.
$$
Using \eqref{eq:3.2} for $2 < r < 4$ with
\begin{equation} \label{eq:3.7}
\frac{1}{p} = \frac{r-2}{2r + 2},\quad \frac{1}{q} = \frac{1}{2r+2},
\end{equation}
 we obtain 
$$
\|u_{n+1}\|_{S}\leq C  \|(f_1,f_2)\|_{\hc}+C\|u_{n}\|^{r+1}_{L^{r+1}([s,s+\tau];L^{2r+2}_x(\R^3))}\,.
$$
Now using the H\"older inequality in time, we can write 
$$
\|u_{n}\|_{L^{r+1}([s,s+\tau];L^{2r+2}_x(\R^3))}
\leq \tau^{\frac{4-r}{2r + 2}} \|u_n\|_{L^{\frac{2r+2}{r-2}}_t([s, s+\tau], L^{2r+2}_x(\R^3))}
\leq 
 \tau^{\frac{4-r}{2r + 2}} \|u_n\|_{S}\,.
$$
Therefore, we arrive at the bound 
\begin{equation}\label{eq:3.8}
\|u_{n+1}\|_{S}\leq C  \|(f_1,f_2)\|_{\hc}+C\tau^{\frac{4-r}{2}}\|u_{n}\|^{r+1}_{S}\,.
\end{equation}
Assume that we have the estimate
$$\|u_{n}\|_{S}\leq  2C  \|(f_1,f_2)\|_{\hc}.$$
Applying  \eqref{eq:3.8}, and choosing $\tau$ so that
$$\tau^{\frac{4-r}{2}} (2C)^{r+1} \|(f_1, f_2)\|_{\hc}^r \leq 1,$$
we obtain the same bound for $\|u_{n+1}\|_{S}$. By recurrence we conclude that
$$
\|u_{n+1}\|_{S}\leq  2C  \|(f_1,f_2)\|_{\hc},\quad \forall\, n\geq 0.
$$
Next, let $w_n =u_{n+1} - u_n$ be a  solution of the problem
 $$\partial_t^2 w_n - \Delta w_n + q(t, x) w_n = |u_{n}|^r u_{n} - |u_{n + 1}|^r u_{n+1}, \,\, w_n(0, x) = \partial_t w_n(0, x) = 0.$$
By using the inequality
$$\Bigl||v|^r v- |w|^r w \Bigr|\leq  D_r |v-w|\Bigl(|v|^r + |w|^r\Bigr),$$
with constant $D_r$ depending only on $r$, we can similarly show that 
\begin{equation*}
\|u_{n+1}-u_n\|_{S}\leq \frac{1}{2}\|u_{n}-u_{n-1}\|_{S}
\end{equation*}
which implies the convergence of $(u_n)_{n \geq 0}$ with respect to the $\|\cdot\|_{S}$ norm.\\

Now assume that $(f_1, f_2) \in H^2(\R^3) \times H^1(\R^3)$ and introduce the norm
$$
\|u\|_{S_1}:=\|(u,\partial_t u)\|_{C([s,s+\tau];H^2(\R^3)\times H^1(\R^3))}+\|\nabla_x u\|_{L^{\frac{2r+2}{r-2}}_t([s, s+\tau], L^{2r+2}_x(\R^3))}\,.
$$
Therefore the sequence $(u_n)_{n \geq 0}$ satisfies the estimate

$$
\|u_{n+1}\|_{S_1}\leq C  \|(f_1,f_2)\|_{H^2(\R^3)\times H^1(\R^3)}+
C\||u_n|^r u_n\|_{L^1([s,s+a];H^1(\R^3))}
$$
and we have
\begin{equation*}
\||u_n|^r u_n\|_{L^1([s,s+a];H^1(\R^3))}
\leq 
C_r\tau^{\frac{4-r}{2}}\|u_{n}\|^{r}_{S}\|u_n\|_{S_1}.
\end{equation*}
which leads to
\begin{equation} \label{eq:3.100}
\|u_{n+1}\|_{S_1}\leq C_1  \|(f_1,f_2)\|_{H^2(\R^3) \times H^1(\R^3)}+C_1\tau^{\frac{4-r}{2}}\|u_{n}\|^{r}_{S}\|u\|_{S^1}\,.
\end{equation}
Indeed, we can write
$$
|u_n|^r u_n= u_n^{r/2+1}\overline{u_n}^{r/2}
$$
and therefore 
$$
\partial_{x_j}(u_n^{r/2+1}\overline{u_n}^{r/2})
=(r/2+1)\partial_{x_j} u_n u_n^{r/2}\overline{u_n}^{r/2}+r/2\,\,\overline{\partial_{x_j} u_n} u_n^{r/2+1}\overline{u_n}^{r/2-1}
$$
yields
$$
|\nabla_{x}(||u_n|^r u_n)|\leq C_r|\nabla_x u_n||u_n|^r.
$$
Applying  the H\"older inequality, one obtains
$$
\|\nabla_{x}(||u_n|^r u_n)|\|_{L^2_x}\leq 
C_1\|\nabla_x u_n\|_{L^{2r+2}_x(\R^3)}\||u_n|^r\|_{L^{\frac{2r+2}{r}}_x(\R^3)}
=C_1\|\nabla_x u_n\|_{L^{2r+2}_x(\R^3)} \|u_n\|^r_{L^{2r+2}_x(\R^3)}.
$$
Increasing, if it is necessary, the constant $C > 0$ we may arrange that (\ref{eq:3.8}) and (\ref{eq:3.100}) hold with the same constant. Therefore
we obtain a local solution $u(t, x) \in C([s , s + \tau], H^2(\R^3) \times H^1(\R^3))$ in the same interval $[s,s + \tau].$
\begin{rem}
We work in the complex setting, but if $(f_1,f_2)$ is real valued, then the solution remains real valued. 
Indeed, if $u$ is a solution of \eqref{eq:3.4} then so is $\overline{u}$ and we may apply the uniqueness to conclude that $u=\overline{u}$.  
\end{rem}
\subsection{Global well-posedness and polynomial bounds }
Fix $(f_1,f_2)\in \hc$.  Let $u$ be the local solution of \eqref{eq:3.4} obtained in Proposition~\ref{LWP} (with $s=0$).  
First we prove the following 
\begin{lem}\label{llcc}
The solutions 
$$u(t, x) \in  C([0, A], H^{2}(\R^3)) \cap C^1([0, A], H^1(\R^3)) \cap L_t^{\frac{2r +2}{r-2}}([0, A], L^{2r +2}_x(\R^3))$$
 of \eqref{eq:3.4} satisfy the relation 
\begin{equation}\label{eq:3.10}
\frac{d}{dt}\int_{\R^3} \big(\frac{1}{2}|\partial_t u|^2 + \frac{1}{2}|\nabla_x u|^2 + \frac{1}{2}q |u|^2 +\frac{1}{r+2} |u|^{r+2}\big)dx  = \frac{1}{2} \Re\int_{\R^3} (\partial_t q) |u|^2 dx,\: 0 \leq  t \leq A.
\end{equation}
\end{lem}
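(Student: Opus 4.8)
The plan is \emph{not} to differentiate $X(t)$ blindly, but rather to test the equation \eqref{eq:3.4} against $\overline{\partial_t u}$ in $L^2(\R^3)$ and then recognise each of the four resulting integrals as the time derivative of one of the four pieces of $X$. First I would extract the regularity consequences of the hypotheses. Since $u\in C([0,A],H^2(\R^3))$, the identity $\partial_t^2 u=\Delta_x u-qu-|u|^r u$ shows that $\partial_t^2 u\in C([0,A],L^2(\R^3))$: indeed $\Delta_x u\in C([0,A],L^2)$, $q$ is smooth with compact support, and $2\leq r<4$ gives $r+2\in[4,6)$, so the Sobolev embedding $H^1(\R^3)\hookrightarrow L^{r+2}(\R^3)$ yields $u\in C([0,A],L^{r+2})$ and hence $|u|^r u\in C([0,A],L^{(r+2)/(r+1)})$. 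In particular $\partial_t u\in C^1([0,A],L^2)$, so that pairing the equation against $\overline{\partial_t u}$ produces a function of $t$ that is continuous and identically zero.

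Next I would identify the four terms. Using $\partial_t u\in C^1([0,A],L^2)$ one gets $\Re\int\partial_t^2 u\,\overline{\partial_t u}\,dx=\frac{d}{dt}\frac12\int|\partial_t u|^2dx$; using $\nabla_x u\in C([0,A],H^1)$ and $\nabla_x\partial_t u\in C([0,A],L^2)$, an integration by parts (no boundary terms on $\R^3$, legitimate since $u\in H^2$ makes $\Delta_x u\in L^2$) gives $-\Re\int\Delta_x u\,\overline{\partial_t u}\,dx=\Re\int\nabla_x u\cdot\overline{\nabla_x\partial_t u}\,dx=\frac{d}{dt}\frac12\int|\nabla_x u|^2dx$. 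For the potential piece, differentiating the product and using that $q$ is smooth with compact support and $u\in C^1([0,A],L^2)$ yields $\frac{d}{dt}\frac12\int q|u|^2dx=\frac12\int(\partial_t q)|u|^2dx+\Re\int qu\,\overline{\partial_t u}\,dx$, so $\Re\int qu\,\overline{\partial_t u}\,dx$ is the derivative of the potential energy minus the $\partial_t q$ contribution.

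The delicate point, which I expect to be the main obstacle, is the rigorous differentiation of the nonlinear energy $\frac1{r+2}\int_{\R^3}|u|^{r+2}dx$, for which I must establish
\[
\frac{d}{dt}\,\frac{1}{r+2}\int_{\R^3}|u|^{r+2}dx=\Re\int_{\R^3}|u|^r u\,\overline{\partial_t u}\,dx .
\]
The cleanest route is to view $\Phi(v)=\frac{1}{r+2}\int_{\R^3}|v|^{r+2}dx$ as a functional on $L^{r+2}(\R^3)$: since $r+2>1$, the map $v\mapsto|v|^r v$ is continuous from $L^{r+2}$ into its dual $L^{(r+2)/(r+1)}$, so $\Phi$ is $C^1$ with Fréchet derivative $\Phi'(v)w=\Re\int|v|^r v\,\overline{w}\,dx$; combined with $u\in C^1([0,A],H^1)\hookrightarrow C^1([0,A],L^{r+2})$, the chain rule applied to $t\mapsto\Phi(u(t))$ gives the displayed identity. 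Alternatively one may differentiate under the integral by dominated convergence, controlling the difference quotients through $u\in C([0,A],L^\infty)$ (from $H^2\hookrightarrow L^\infty$); this is where the $H^2$ hypothesis enters and is the reason the identity is first proved in this higher-regularity class, the extension to general $\hc$ data being carried out afterwards by approximation on small intervals. With the four identifications in hand I would add them: the left-hand sides sum to $\frac{d}{dt}X(t)$, while on the right the combination $\Re\int(\partial_t^2 u-\Delta_x u+qu+|u|^ru)\overline{\partial_t u}\,dx$ vanishes by \eqref{eq:3.4}, leaving $\frac{d}{dt}X(t)=\frac12\int(\partial_t q)|u|^2dx$; since $(\partial_t q)|u|^2$ is real the operator $\Re$ may be inserted freely, which is exactly \eqref{eq:3.10}.
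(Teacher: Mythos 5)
Your overall strategy is the same as the paper's: pair the equation with $\overline{\partial_t u}$ and identify each resulting integral as an exact time derivative, the potential term producing the extra $\frac{1}{2}\int(\partial_t q)|u|^2dx$. One step, however, is justified incorrectly. You claim $\partial_t^2 u\in C([0,A],L^2(\R^3))$ by citing $|u|^r u\in C([0,A],L^{(r+2)/(r+1)}(\R^3))$, which you obtain from $H^1\hookrightarrow L^{r+2}$. Since $(r+2)/(r+1)=1+\frac{1}{r+1}\leq \frac{4}{3}<2$, membership in $L^{(r+2)/(r+1)}$ does not place $|u|^r u$ in $L^2$; the equation then only yields $\partial_t^2 u\in C([0,A],L^2+L^{(r+2)/(r+1)})$, so the inference fails as written. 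This matters because your identification of the kinetic term, $\Re\int \partial_t^2 u\,\overline{\partial_t u}\,dx=\frac{d}{dt}\frac{1}{2}\int|\partial_t u|^2dx$, rests precisely on $\partial_t u\in C^1([0,A],L^2)$. The repair is exactly what the paper does: $H^2(\R^3)\hookrightarrow L^\infty(\R^3)$ gives $u\in C([0,A],L^\infty_x)$, whence $\||u|^r u\|_{L^2}\leq \|u\|_{L^\infty}^r\|u\|_{L^2}$ shows $|u|^r u\in C([0,A],L^2_x)$ and then $\partial_t^2 u\in C([0,A],L^2_x)$. Note that the correct exponent here is $2r+2$, not $r+2$: one needs $u\in L^{2r+2}$, and for $2<r<4$ one has $2r+2>6$, so no $H^1$-based embedding can work and the $H^2$ hypothesis is genuinely needed at this point. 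Since you invoke $H^2\hookrightarrow L^\infty$ later anyway (for the dominated-convergence variant), the fix costs one line, but as written this step is a gap.

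With that repaired, the rest of your argument is correct, and your treatment of the nonlinear energy is a genuine, and arguably cleaner, alternative to the paper's. The paper differentiates $\frac{1}{r+2}\int|u|^{r+2}dx$ through the pointwise algebraic identity $\Re\bigl(|u|^r u\,\overline{\partial_t u}\bigr)=\frac{1}{r+2}\partial_t\bigl(|u|^{r+2}\bigr)$ (written there in terms of $u^{r/2+1}\bar{u}^{r/2}$), invoking approximation by smooth functions, and reads \eqref{eq:3.10} in ${\mathcal D}'(]0,A[)$, upgrading to a classical derivative by continuity of the right-hand side (see the remark following the lemma). Your route instead shows that $\Phi(v)=\frac{1}{r+2}\|v\|_{L^{r+2}}^{r+2}$ is $C^1$ on $L^{r+2}(\R^3)$ because $v\mapsto|v|^r v$ is continuous from $L^{r+2}$ into its dual $L^{(r+2)/(r+1)}$, and then applies the chain rule along $u\in C^1([0,A],L^{r+2})$, which follows from $u\in C^1([0,A],H^1)$ and $r+2\leq 6$. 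This gives classical differentiability of the nonlinear energy directly, uses only the $C^1([0,A],H^1)$ hypothesis for that term, and the same functional-calculus device also handles $\frac{1}{2}\int q|u|^2dx$. The price is that one must know the $C^1$ regularity of the Nemytskii-type functional $\Phi$, whereas the paper's computation is elementary; both are legitimate.
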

\begin{rem} We show that $(\ref{eq:3.10})$ holds in the sense of distributions ${\mathcal D}'(]0, A[)$. Since the right hand side of $(\ref{eq:3.10})$ is continuous in $]0, A[$ the derivative of the left hand side can be taken in the classical sense.
\end{rem}
\begin{proof} Let us  first remark  that $\int_{\R^3} |u|^{j+2}(t, x) dx \leq \|u(t, x)\|_{H^1_x(\R^3)}^{j +2}$ for $0\leq j<4$, thanks to the Sobolev embedding $H^1(\R^3)\hookrightarrow  L^{j+2}(\R^3)$.
Moreover, from our assumption it follows that $u(t, x) \in C([0, A], L^{\infty}_x(\R^3))$ and this implies
$$|u|^r(t, x) u (t,x) \in C([0, A], L^2_x(\R^3)).$$
Therefore, from the equation (\ref{eq:3.4}) we deduce $\partial_t^2 u(t, x) \in C([0, A],L^2_x(\R^3))$.

To verify (\ref{eq:3.10}), notice that
\begin{eqnarray*}
\Re \big(\int_{\R^3} (\partial_t^2 u - \Delta_x u + |u|^r u)\overline{\partial_t u} dx\big)  & = & - \Re\big(\int_{\R^3} q(t, x)u \overline{\partial_t u}dx\big)
\\
& = & -\frac{1}{2}\frac{d}{dt}\big(\int q |u|^2 dx\big) + \frac{1}{2}\int (\partial_t q) |u|^2 dx
\end{eqnarray*}
and the integrals 
$$\int_{\R^3} (\partial_t^2 u - \Delta_x u) \overline{\partial_t u} dx,\:\int_{\R^3}|u|^r u \bar{u}_t dx$$
are well defined. 
 After an approximation with smooth functions and integration by parts we deduce 
$$\Re \int_{\R^3} \Bigl(\partial_t^2 u -\Delta_x u\Bigr) \overline{\partial_t u} dx = \frac{d}{dt}\int_{\R^3} \frac{1}{2}(|\partial_t u|^2 + |\nabla_x u|^2) dx.$$
On the other hand,
$$
 (r/2 + 1)
(u^{\frac{r}{2}} \bar{u}^{\frac{r}{2} + 1} \partial_t u + u^{\frac{r}{2} + 1} \bar{u}^{\frac{r}{2}} \partial_t \bar{u})
 = \big(\partial_t (u^{\frac{r}{2} + 1}) \bar{u}^{\frac{r}{2} + 1} + \partial_t(\bar{u}^{\frac{r}{2} + 1})u^{\frac{r}{2} + 1}\big)$$
and hence 
$$ \Re \int_{\R^3}|u|^r u \bar{u}_t dx = \frac{1}{r+2}\frac{d}{dt}\big( \int_{\R^3}|u|^{r+2}dx\big).$$ 
Thus (\ref{eq:3.10}) holds for $0 < t < A$ and by continuity one covers the interval $[0, A].$ 
\end{proof}
We need the following simple lemma.
\begin{lem} \label{ode}
Let $0 < \gamma < 1$ and let $X(t) : [0,\infty) \rightarrow [0,\infty)$ be a derivable function such that for some $A>0$,
\begin{equation*} 
|X'(t)| \leq C X^{1- \gamma}(t), \quad 0 \leq t\leq A.
\end{equation*}
Then 
$$
X(t) \leq (X^{\gamma}(0) +C \gamma t)^{\frac{1}{\gamma}},\quad 0 \leq t \leq A.
$$
\end{lem}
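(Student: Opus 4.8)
The plan is to convert the nonlinear differential inequality into a linear one by the substitution $Y=X^{\gamma}$. Formally, if $X$ is positive and we set $Y(t)=X^{\gamma}(t)$, then $Y'(t)=\gamma X^{\gamma-1}(t)X'(t)$, so the hypothesis $|X'(t)|\leq CX^{1-\gamma}(t)$ gives
$$
|Y'(t)|=\gamma X^{\gamma-1}(t)\,|X'(t)|\leq \gamma X^{\gamma-1}(t)\cdot C X^{1-\gamma}(t)=C\gamma,\quad 0\leq t\leq A.
$$
A function with derivative bounded by $C\gamma$ satisfies $Y(t)\leq Y(0)+C\gamma t$ by the mean value theorem, and raising to the power $1/\gamma$ yields exactly $X(t)\leq (X^{\gamma}(0)+C\gamma t)^{1/\gamma}$.

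The one point requiring care, and the only genuine obstacle, is that $X$ may vanish: since $\gamma-1<0$, the factor $X^{\gamma-1}(t)$ is singular wherever $X(t)=0$, so the computation above is not literally valid and $Y=X^{\gamma}$ need not be differentiable at such points. I would remove this difficulty by regularization. For fixed $\epsilon>0$ set $Y_{\epsilon}(t)=(X(t)+\epsilon)^{\gamma}$. Because $1-\gamma>0$ the map $s\mapsto s^{1-\gamma}$ is nondecreasing on $[0,\infty)$, hence the hypothesis upgrades to $|X'(t)|\leq CX^{1-\gamma}(t)\leq C(X(t)+\epsilon)^{1-\gamma}$. Now $X(t)+\epsilon\geq\epsilon>0$ on all of $[0,A]$, so $Y_{\epsilon}$ is differentiable there (composition of the differentiable $X$ with the smooth map $s\mapsto (s+\epsilon)^{\gamma}$) and
$$
|Y_{\epsilon}'(t)|=\gamma\,(X(t)+\epsilon)^{\gamma-1}\,|X'(t)|\leq \gamma\,(X(t)+\epsilon)^{\gamma-1}\cdot C(X(t)+\epsilon)^{1-\gamma}=C\gamma.
$$

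To finish, I would apply the mean value theorem to $Y_{\epsilon}$ on $[0,t]$ for each $t\in[0,A]$, obtaining $Y_{\epsilon}(t)-Y_{\epsilon}(0)\leq C\gamma t$, that is,
$$
(X(t)+\epsilon)^{\gamma}\leq (X(0)+\epsilon)^{\gamma}+C\gamma t,\quad 0\leq t\leq A.
$$
Letting $\epsilon\to 0$ and using continuity of $s\mapsto s^{\gamma}$ gives $X^{\gamma}(t)\leq X^{\gamma}(0)+C\gamma t$, and raising both sides to the power $1/\gamma$ (legitimate since both sides are nonnegative and $t\mapsto t^{1/\gamma}$ is increasing) produces the claimed bound. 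I do not expect any further difficulty: the regularization is precisely what makes the elementary chain-rule estimate rigorous across the zeros of $X$.
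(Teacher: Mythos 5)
Your proof is correct and follows essentially the same route as the paper: differentiate $X^{\gamma}$ (via the chain rule) to get the derivative bound $C\gamma$, integrate, and handle possible zeros of $X$ by replacing $X$ with $X+\epsilon$ and letting $\epsilon\to 0$. The only difference is that you spell out explicitly why $X+\epsilon$ still satisfies the hypothesis (monotonicity of $s\mapsto s^{1-\gamma}$), a detail the paper leaves implicit in its phrase ``apply the previous argument to $X(t)+\epsilon$.''
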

\begin{proof}
First assume that $X(t) >0$ for all $0 \leq t \leq A.$ We have
$$\Bigl|\frac{d}{dt} (X^{\gamma}(t))\Bigr| = \gamma\Bigl|X^{\gamma - 1}(t)X'(t) \Bigr| \leq C\gamma.$$
Hence
$$X^{\gamma}(t) = \Bigl|\int_0^t (X^{\gamma})'(\tau) d\tau + X^{\gamma}(0)\Bigr| \leq X^{\gamma}(0) + C \gamma t$$
and we obtain the assertion for $X(t)>0$. 
In the general case,
we apply the previous argument to  $X(t) +\epsilon, \: \epsilon > 0$ and we let $\ep \to 0$. This completes the proof. 
\end{proof}
Let $u(t, x) \in C([0, A), H^2(\R^3) \cap C^1([0, A], H^1(\R^3))\cap L_t^{\frac{2r+2}{r-2}}([0, A], L^{2r + 2}_x(\R^3))$
be a solution of (\ref{eq:3.4}) and let
$$
X(t)=\int_{\R^3} \big(\frac{1}{2}|\partial_t u|^2 + \frac{1}{2}|\nabla_x u|^2 + \frac{1}{2}q |u|^2 +\frac{1}{r+2} |u|^{r+2}\big)dx \,.
$$
The support property $q(t, x) = 0$ for $|x|> \rho$ and the H\"older inequality  imply 
$$
\Big|\int_{\R^3} (\partial_t q) |u|^2 dx\Big| \leq C \|u(t,\cdot)\|^2_{L^2(|x|\leq \rho)} \leq C_1 \|u(t, \cdot))\|^{2}_{L^{r + 2}(|x| \leq \rho)}.
$$
Therefore
$$
|X'(t)| \leq C_2 X^{\frac{2}{r+2}}(t) = C_2 X^{1 - \frac{r}{r+2}}(t) 
$$
and  applying Lemma~\ref{ode},  we deduce
\begin{equation} \label{eq:3.11}
X(t) \leq \Bigl(X^{\frac{r}{r+2}}(0) +  \frac{C_2 r}{r+2}t\Bigr)^{\frac{r+2}{r}}\,\: 0 \leq t \leq A.
\end{equation}

As a consequence of \eqref{eq:3.11} we get 
\begin{equation*} 
\big(
\|\partial_t u(t, \cdot)\|^2_{L^2(\R^3)} + \|\nabla_x u(t, \cdot)\|^2_{L^2(\R^3)}
\big)^{\frac{1}{2}}
 \leq \sqrt{2}\Bigl(X^{\frac{r}{r+2}}(0) + \frac{C_2 r}{r+2} t\Bigr)^{\frac{r+2}{2r}}
\end{equation*}
and therefore 
\begin{equation*} 
\|\partial_t u(t, \cdot)\|_{L^2(\R^3)} + \|\nabla_x u(t, \cdot)\|_{L^2(\R^3)} \leq 2\Bigl(X^{\frac{r}{r+2}}(0) + \frac{C_2 r}{r+2} t\Bigr)^{\frac{r+2}{2r}}\,.
\end{equation*}
On the other hand,
$$X(0) \leq A_r\|(u,u_t)(0,x)\|_1^2\Bigl( 1 + \|(u, u_t)(0, x)\|_1^r\Bigr)$$
with a constant $A_r$ depending on $r.$ Hence from (\ref{eq:3.11}) we get 
$$\|\partial_t u(t, \cdot)\|_{L^2(\R^3)} + \|\nabla_x u(t, \cdot)\|_{L^2(\R^3)} \leq 2\Bigl(X^{\frac{r}{r+2}}(0) + \frac{C_2 r}{r+2} t\Bigr)^{\frac{r+2}{2r}}$$
\begin{equation} \label{eq:3.12}
\leq 2\Bigl(A_r^{\frac{r}{r+2}}\|(u,u_t)(0, x)\|_1^{\frac{2r}{r+2}}\Bigl[1 + \|(u,u_t)(0, x)\|_1^r\Bigr]^{\frac{r}{r+2}} + \frac{C_2 r}{r+2} t\Bigr)^{\frac{r+2}{2r}}, \: 0 \leq t \leq A.
\end{equation}
Finally, from
$$u(t, x) = u(0, x) + \int_0^t  \partial_t u(\tau, x) d\tau$$ 
one deduces
\begin{equation*} 
\|u(t, x)\|_{L^2} \leq \|u(0,x)\|_{L^2} + 2 t\Bigl(X^{\frac{r}{r+2}}(0) + \frac{C_2 r}{r+2}t\Bigr)^{\frac{r+2}{2r}}\,.
\end{equation*}
This yields a polynomial bound for the solutions 
$$u(t, x) \in C([0, A], H^2(\R^3))\cap C^1([0, A], H^1(\R^3)) \cap L_t^{\frac{2r+2}{r-2}}([0, A], L^{2r+2}_x(\R^3)).$$
 
Now we pass to the global existence of solution of (\ref{eq:3.4}). We will deal with the case $2< r < 4,$ while the case $r = 2$ can be covered by the Sobolev embedding theorem. We fix a number $a > 0$ and our purpose is to show that (\ref{eq:3.4}) has a solution for $t \in [0, a]$ with initial data $f\in \hc.$ We fix $p, q$ by (\ref{eq:3.7}) and let the Strichartz estimate
(\ref{eq:3.2}) holds in the interval $[0, a]$ with a constant $C_a > 0$. The above argument yields a local solution $u(t, x)$
with initial data $f = (f_1, f_2) \in \hc$ for $t \in [s, s +\tau]$. Recall that $\tau = c(1 + \|f\|_{\hc})^{-\gamma}.$ 
Introduce the number
$$B_a: = \|f\|_{\hc} + a(B_1 + B_2 a)^{\frac{r+2}{2r}},$$
where $B_1 > 0$ and $B_2 > 0$ depend only on $\|f\|_{\hc}$ and $r$. This number should be a bound of the energy of the solution $u(t, x)$ in $[0, a]$ with initial data $f \in \hc$ if the above argument based on Lemma~\ref{llcc} and Lemma~\ref{ode} works. However,  the proof of Lemma~\ref{llcc}  cannot be applied directly for functions $u(t, x) \in C([0, a], H^1(\R^3)) \cap C^1([0, a],L^2(\R^3)).$\\

Define $\tau(a):  = c(1 + B_a)^{-\gamma} < 1$ with the constants $c > 0, \gamma > 0$ of Proposition~\ref{LWP} and observe that the local existence theorem can be applied in the interval $[s, s + \tau(a)]\subset [0, a]$ if the norm of the initial data for $t = s$ is bounded by $B_a.$ To overcome the difficulty connected with Lemma~\ref{llcc} 
and since we did not prove in Proposition~\ref{LWP} the continuous dependence with respect to the initial data in $\hc$, we need to apply an approximation argument in 
$[s, s + \ep(a)]$, where the number $0 < \ep(a) \leq \tau(a)$ will be defined below. For simplicity we treat the case $s = 0$ below.
\\

By the local existence let $u(t, x)$ be the solution of (\ref{eq:3.4}) in $[0, \tau(a)]$ with initial data $f =(f_1, f_2)\in \hc.$ Choose a sequence $g_n = ((g_n)_1, (g_n)_2) \in H^2(\R^3) \times H^1(\R^3)$ converging in $\hc$ to $(f_1, f_2)\in \hc$ as $n \to \infty$ and let $w_n(t, x)$ be the solution of the problem (\ref{eq:3.4}) in the {\it same interval} $[0, \tau(a)]$ with initial data $g_n$. Then by Proposition~\ref{LWP},
$$w_n(t, x) \in C([0, \tau(a)], H^2(\R^3)\cap C^1([0, \tau(a)], H^1(\R^3)) \cap L_t^{\frac{2r+2}{r-2}}([0, \tau(a)], L^{2r + r}_x(\R^3)).$$ 
Set $v_n = w_n - u$. We claim that for $n \to \infty$ we have
\begin{equation*} 
\|(v_n, (v_n)_t)\|_{C([0, \ep(a)], \hc)} + \|v_n\|_{L^p_t([0, \ep(a)], L^q_x(\R^3))} \rightarrow 0
\end{equation*}
 with $0 <\ep(a) \leq \tau(a)$ defined below. Clearly, $v_n$ is a solution of the equation
$$\partial_t^2 v_n - \Delta v_n + q(t, x) v_n = |u|^r u - |w_n|^r w_n.$$
Applying (\ref{eq:3.2}), one obtains
\begin{eqnarray} \label{eq:3.13}
\|(v_n,(v_n)_t)\|_{C([0,\ep(a)], \hc)} + \|v_n\|_{L^{\frac{2r+2}{r-2}}_t([0, \ep(a)], L^{2r+2}_x(\R^3))}\nonumber \\
 \leq C_a \|g_n - f\|_{\hc} + C_a\||u|^r u - |w_n|^r w_n\|_{L^1([0,\ep(a)], L^2_x(\R^3))}
\end{eqnarray}
and
$$\|(|u|^r u- |w_n|^rw_n)(t, .)\|_{L^2_x} \leq  C\|v_n(t, .)\|_{L^{2r + 2}_x} \Bigl(\|u(t, .)\|_{L^{2r + 2}_x}^r + \|w_n(t, .)\|_{L^{2r + 2}_x}^r\Bigr).$$
Since $\frac{1}{p} + \frac{r}{p} + \Bigl(1 - \frac{r+1}{p}\Bigr) = 1$, by the generalized 
H\"older inequality in the integral with respect to $t$ in (\ref{eq:3.13}) for large $n \geq n_0$ we get
$$C_a\| |u|^r u- |w_n|^r w_n\|_{L^1([0,\ep(a)], L^2_x(\R^3))}$$
$$\leq D_r C_a\ep(a)^{(1 - \frac{r+1}{p})} \|v_n\|_{L^p([0,\ep(a)], L^q_x)} \Bigl(\|u\|^{r}_{L^p([0, \ep(a)], L^q_x)}+
\|w_n\|^{r}_{L^p([0, \ep(a)], L^q_x)}\Bigr)$$
$$ \leq 2D_r C_a^{r+1}(\|f\|_{\hc} + 1)^r\ep(a)^{(1 - \frac{r+1}{p})} \|v_n\|_{L^p([0,\ep(a)], L^q_x)}.$$
Here $D_r$ is a constant depending only on $r$ and we used that by Proposition~\ref{LWP} 
\begin{equation} \label{eq:3.14}
 \|w_n\|_{L^{\frac{2r+2}{r-2}}([0,\ep(a)], L^{2r + 2}_x)} \leq C_a \|g_n\|_{\hc} \leq C_a (\|f\|_{\hc} + 1),\: n \geq n_0
\end{equation}
with a similar estimate for $\|u\|_{L^{\frac{2r + 2}{r-2}}([0,\ep(a)], L^{2r+2}_x)}.$
Clearly, $1 - \frac{r+1}{p} = 2 - \frac{r}{2} > 0$ and we choose $0 < \ep(a) \leq \tau(a)$, so that
$$2 D_r C_a^{r + 1}(B_a + 1)^r \ep(a)^{(1 - \frac{r+1}{p})} \leq \frac{1}{2}.$$
Then we may absorb the term on right hand side of (\ref{eq:3.13}) involving $w_n, u$ and letting $n \to \infty$, we prove our claim. Moreover, for almost all $t \in [0, \ep(a)]$, taking into account (\ref{eq:3.14}),  we have
$$\Bigl|\int_{\R^3} \Bigl( |u(t, x)|^{r+2} - |w_n(t, x)|^{r+2}\Bigr) dx\Bigr|$$
$$  \leq D_r \|u(t, x) - w_n(t, x)\|_{L^2(\R^3)} \Bigl( \|u(t, x)\|^{r+1}_{L^{2r+2}_x(\R^3)} + \|w_n(t, x)\|^{r+1}_{L^{2r+2}_x(\R^3)}\Bigr) dx \longrightarrow_{n \to \infty} 0.$$
Consequently, we have
$$\int_{\R^3} \Bigl(\frac{1}{2}\Bigl(|\partial_t w_n|^2 + |\nabla_x w_n|^2 + q |u|^2\Bigr) +\frac{1}{r+2} |w_n|^{r+2}\Bigr)dx$$
$$ \longrightarrow_{n \to \infty} \int_{\R^3} \big(\frac{1}{2}(|\partial_t u|^2 + |\nabla_x u|^2 + q |u|^2) +\frac{1}{r+2} |u|^{r+2}\big)dx$$
 in the sense of distributions ${\mathcal D}'(0, \ep(a)).$ The equality (\ref{eq:3.10}) for $0 \leq t \leq \ep(a)$ holds for
 $w_n$ and passing to a limit in the sense of distributions, we conclude that (\ref{eq:3.10}) holds for $u(t, x)$ for $0 < t < \ep(a)$ and hence for $0 \leq t \leq \ep(a)$. The right hand side of (\ref{eq:3.11}) is continuous with respect to $t$, hence the derivative with respect to $t$ is taken in a classical sense. Thus we are in position to apply Lemma~\ref{ode} for the $u(t, x).$ Finally, we deduce (\ref{eq:3.12}) for the solution $u(t, x)$ and the norm $\|(u, u_t)(t, .)\|_{\hc}$
for $t \in [0, \ep(a)]$  is bounded by $B_a$ introduced above.\\

Now we pass to the second step in the interval $[\ep(a), 2\ep(a)] \subset [0, a]$. As it was mentioned above, we have a bound $B_a$ for the norm of the initial data $(u(\ep(a),x), u_t(\ep(a), x))$. By the local existence we have solution in $[\ep(a), 2\ep(a)]$ and $u(t, x)$ is defined in $[0, 2 \ep(a)].$ On the other hand, we may approximate the initial data  $(u(\ep(a),x), u_t(\ep(a), x))$ by functions $g_n^{(2)} \in H^2 \times H^1$ and by the above argument the solution $u(t, x)$ in $[\ep(a), 2\ep(a)]$ is approximated by solutions $w_n^{(2)}(t, x)$ for which (\ref{eq:3.11}) holds for $\ep(a) \leq t \leq 2 \ep(a).$ Thus (\ref{eq:3.10}) is satisfied for $u(t, x)$ for $\ep(a) \leq t < 2 \ep(a)$ and combining this with the first step, one concludes that the same is true for $0 \leq t \leq 2\ep(a).$ This makes possible to apply Lemma~\ref{ode} for $0 \leq t \leq 2 \ep(a)$ and to deduce (\ref{eq:3.12}) with uniform constants
leading to a bound by $B_a$.  We can iterate this procedure, since $\tau(a), \ep(a)$ depend only on $\|f\|_{\hc},C_a$ and $r$, while $B_a$ depends on $\|f\|_{\hc}, a$ and $r$. The solution $u(t, x)$ will be defined globally in a interval $[0, \alpha(a)]$ with $0 < a - \alpha(a) < \ep(a)$. Since $\alpha(a) > a - \ep(a) >a-1$ and $a$ is arbitrary, we have a global solution $u(t, x)$ defined for $t \geq 0.$ 
An application of Lemma 3 justifies the bound (\ref{eq:3.12}) for $u(t, x)$ and for all $t \geq 0$ with constants depending only on $\|f\|_{\hc}$ and $r$.
A similar analysis holds for negative times $t$.

\begin{rem}
It is likely that in the case $r=2$ by using the  approach of \cite{PTV} one may obtain polynomial bounds on the higher Sobolev norms $H^\sigma(\R^3) \times H^{\sigma-1}(\R^3)$, $\sigma>1,$ of the solutions of \eqref{eq:3.4}. 
\end{rem}
\subsection{A uniform bound}
As a byproduct of the (semi-linear) global well-posedness, we have the following uniform bound on the solutions of \eqref{eq:3.4}. 
\begin{prop}\label{frechet}
Let $R>0$ and $A>0$. Then there exists a constant $C(A, R) > 0$ such that for every $(f_1,f_2)\in \hc$ such that $\|(f_1,f_2)\|_{\hc}<R$ the solution $u(t, x)$ of \eqref{eq:3.4} satisfies 
\begin{equation} \label{eq:3.15}
\|u\|_{L^{r+1}([0,A];L^{2r+2}_x(\R^3))}\leq C(A, R)\|(f_1,f_2)\|_{\hc}\,.
\end{equation}
\end{prop}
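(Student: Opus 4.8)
The plan is to combine a sharpened, \emph{linear-in-the-data} energy estimate on the fixed interval $[0,A]$ with the local Strichartz bound of Proposition~\ref{LWP}, applied on a finite partition of $[0,A]$ whose mesh is dictated by the uniform energy bound. The point to keep in mind is that the bound \eqref{eq:3.12} already proved is uniform over all data but is \emph{not} linear in $\|(f_1,f_2)\|_{\hc}$: at a fixed $t>0$ its right-hand side stays bounded away from $0$ as the data shrink. Since \eqref{eq:3.15} must be linear, the first task is to produce a linear energy bound, and this is exactly where the argument differs from the proof of Theorem~\ref{th2}.

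First I would revisit the identity \eqref{eq:3.10}, i.e. $X'(t)=\tfrac12\Re\int_{\R^3}(\partial_t q)|u|^2\,dx$, but estimate its right-hand side differently. Using that $q$ is supported in $\{|x|\le\rho\}$, the H\"older inequality on the ball, and the Sobolev embedding $\dot H^1(\R^3)\hookrightarrow L^6(\R^3)$, one gets
$$
|X'(t)|\le C\,\|u(t,\cdot)\|_{L^2(|x|\le\rho)}^2\le C\,\|u(t,\cdot)\|_{L^6(\R^3)}^2\le C\,\|\nabla_x u(t,\cdot)\|_{L^2(\R^3)}^2\le 2C\,X(t),
$$
a \emph{linear} differential inequality (in contrast with $|X'|\le CX^{1-\frac{r}{r+2}}$ used for the polynomial bound). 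Gronwall then yields $X(t)\le e^{2CA}X(0)$ on $[0,A]$. Since $X(0)\le C\big(\|(f_1,f_2)\|_{\hc}^2+\|(f_1,f_2)\|_{\hc}^{r+2}\big)$ by $H^1\hookrightarrow L^{r+2}$ (valid as $r+2<6$), this bounds $\|\nabla_x u\|_{L^2}^2+\|\partial_t u\|_{L^2}^2\le 2X(t)$ linearly in $\|(f_1,f_2)\|_{\hc}^2$ for $\|(f_1,f_2)\|_{\hc}<R$. To recover the full $\hc$-norm I would integrate $\frac{d}{dt}\|u\|_{L^2}\le\|\partial_t u\|_{L^2}\le\sqrt{2X(t)}$, obtaining $\|u(t,\cdot)\|_{L^2}\le\|f_1\|_{L^2}+A\,\sup_{[0,A]}\sqrt{2X}$. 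Altogether this gives the key bound
$$
\sup_{0\le t\le A}\|(u,\partial_t u)(t,\cdot)\|_{\hc}\le C_1(A,R)\,\|(f_1,f_2)\|_{\hc},\qquad \|(f_1,f_2)\|_{\hc}<R.
$$

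With $M:=C_1(A,R)\,R$ in hand, I would partition $[0,A]$ into $N$ intervals $I_j$ of length at most $\tilde\tau:=c(1+M)^{-\gamma}$, with $c,\gamma$ as in Proposition~\ref{LWP}; here $N\le A/\tilde\tau+1$ depends only on $A,R$. On each $I_j$ the datum $(u,\partial_t u)(t_j,\cdot)$ has $\hc$-norm at most $M$, so the existence time of Proposition~\ref{LWP} is at least $\tilde\tau$ and the bound there applies on $I_j$; combined with the previous step at $t=t_j$ it gives
$$
\|u\|_{L^{\frac{2r+2}{r-2}}_t(I_j,\,L^{2r+2}_x)}\le C\,\|(u,\partial_t u)(t_j,\cdot)\|_{\hc}\le C\,C_1(A,R)\,\|(f_1,f_2)\|_{\hc}.
$$
The H\"older inequality in time (with the exponent relation $\frac1{r+1}=\frac{r-2}{2r+2}+\frac{4-r}{2r+2}$ already used in Section~3.1) converts this to $\|u\|_{L^{r+1}(I_j,L^{2r+2}_x)}\le\tilde\tau^{\frac{4-r}{2r+2}}\,\|u\|_{L^{\frac{2r+2}{r-2}}_t(I_j,L^{2r+2}_x)}$. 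Finally, using the exact additivity $\|u\|_{L^{r+1}([0,A],L^{2r+2}_x)}^{r+1}=\sum_j\|u\|_{L^{r+1}(I_j,L^{2r+2}_x)}^{r+1}$ and summing the $N$ identical bounds yields
$$
\|u\|_{L^{r+1}([0,A],L^{2r+2}_x)}\le\big(N\,\tilde\tau^{\frac{4-r}{2}}\big)^{\frac1{r+1}}\,C\,C_1(A,R)\,\|(f_1,f_2)\|_{\hc}=C(A,R)\,\|(f_1,f_2)\|_{\hc},
$$
which is \eqref{eq:3.15}. For $r=2$ the Strichartz step is unnecessary: $\|u\|_{L^3([0,A],L^6_x)}\le A^{1/3}\sup_{[0,A]}\|u(t,\cdot)\|_{L^6_x}\le C\,A^{1/3}\sup_{[0,A]}\|(u,\partial_t u)(t,\cdot)\|_{\hc}$ by $H^1\hookrightarrow L^6$, and one concludes via the first step alone.

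The main obstacle is the first step: the energy estimate behind Theorem~\ref{th2} is deliberately sublinear (to produce the polynomial-in-$t$ bound), and it does not give linearity in the data. The remedy is to trade the sublinear inequality $|X'|\le CX^{1-\frac{r}{r+2}}$ for the linear one $|X'|\le CX$, which costs an exponential factor $e^{2CA}$ but is harmless on a fixed interval and restores the homogeneity $\|u\|\sim\|(f_1,f_2)\|$ needed for \eqref{eq:3.15}. One should also check that the identity \eqref{eq:3.10} and the computation $\frac{d}{dt}\|u\|_{L^2}^2=2\Re\int u\,\overline{\partial_t u}\,dx$ are legitimate for the $\hc$-solution; both are available from the approximation argument already carried out in Section~3.2, since $(u,\partial_t u)\in C([0,A],\hc)$ implies $u\in C^1([0,A],L^2(\R^3))$.
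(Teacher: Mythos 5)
Your proposal is correct, and its second half --- partitioning $[0,A]$ into subintervals whose length is dictated by a uniform energy bound, applying the local estimate \eqref{eq:3.5} of Proposition~\ref{LWP} on each piece, and concluding by the H\"older inequality in time --- is precisely the skeleton of the paper's own proof. The genuine difference lies in how linearity in $\|(f_1,f_2)\|_{\hc}$ is obtained. The paper does not prove your linear energy bound: it invokes the (nonlinear, polynomial) global bound behind Theorem~\ref{th2} only to produce the uniform level $R'=R'(R,A)$ that fixes the subinterval length $\tau=c(1+R')^{-\gamma}$, and then extracts linearity by iterating the linear-in-data local bound from one subinterval to the next, so that on $[k\tau,(k+1)\tau]$ one gets an estimate of the form $C_A^{k}\|(f_1,f_2)\|_{\hc}$ --- a geometrically growing constant, but with at most $A/\tau$ factors, hence still of the admissible form $C(A,R)\|(f_1,f_2)\|_{\hc}$. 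You instead upgrade the energy inequality itself: trading $|X'(t)|\le C X^{1-\frac{r}{r+2}}(t)$ for the linear inequality $|X'(t)|\le C X(t)$ (H\"older on the support of $q$ plus the Sobolev embedding $H_D(\R^3)\hookrightarrow L^6(\R^3)$), then applying Gronwall to get the intermediate statement $\sup_{0\le t\le A}\|(u,\partial_t u)(t,\cdot)\|_{\hc}\le C_1(A,R)\,\|(f_1,f_2)\|_{\hc}$; after that, a single application of \eqref{eq:3.5} per subinterval suffices, with no iteration of constants. Both routes are sound. Yours relies on the identity \eqref{eq:3.10} for $\hc$-solutions --- legitimately available, as you note, from the approximation argument of Section~3.2 --- and costs an exponential factor $e^{CA}$, but it yields a stronger and independently useful conclusion (the solution map is linearly bounded in the energy norm on $[0,A]$), and it correctly diagnoses why the polynomial bound \eqref{eq:3.12} alone cannot give \eqref{eq:3.15}. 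The paper's route is more economical, reusing only Proposition~\ref{LWP} and the already-established uniform bound. Your separate treatment of $r=2$ via $H^1(\R^3)\hookrightarrow L^6(\R^3)$ is consistent with the paper's remark that Strichartz estimates are not needed in that case.
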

\begin{proof}
Thanks to the global bounds on the solutions, we obtain that there exists $R'=R'(R,A)$ such that if $\|(f_1,f_2)\|_{\hc}<R$ then the corresponding solutions satisfies 
$$
\sup_{0\leq t\leq A}\|(u(t,\cdot),\partial_t u(t,\cdot)\|_{\hc}\leq R'\,.
$$
Denote by $\tau = \tau(A, R') > 0$  the local existence time for initial data having $\hc$ norm $\leq R'$, i.e. $\tau=c(1+R')^{-\gamma}$ with the notations of Proposition~\ref{LWP}. Next we split the interval $[0, A]$ in intervals of size $\tau.$ In every interval $[k\tau, (k+1)\tau]$ we apply the estimate (\ref{eq:3.2}) with $F = 0$ and constant $C_A$ independent on $k$. Thus we obtain a bound
$$\|u(t, x)\|_{L^{\frac{2r + 2}{r-2}}([k \tau, (k+1)\tau], L^{2r + 2}_x(\R^3))} \leq C_A^k \|(f_1, f_2)\|_{\hc}, \: 1 \leq k +1 \leq A/\tau.$$
By using the H\"older inequality for the integral with respect to $t$, we obtain easily (\ref{eq:3.15}).
\end{proof}
\section{Proof of Theorem~\ref{th3}}
Let $$\hc\ni f \rightarrow \uc(t, s)f = (v(t,x; s), v_t(t, x; s)) \in \hc$$
be the monodromy operator corresponding to the Cauchy problem (\ref{eq:3.3}) with initial data $f$ for $t = s$. For $\uc(t, s)$ we have the representation
\begin{equation} \label{eq:4.1}
\uc(t, s)f = V(t, s)f - \int_s^t V(t, \tau) Q_0\big(|\uc(\tau, s)f|^r\uc(\tau, s)f\big)d\tau,
\end{equation}
where
$$Q_0 = \begin{pmatrix} 0 & 0\\
1 & 0\end{pmatrix}.
$$
Therefore we can write $\uc(t + T, s + T)f$ as
$$
 V(t+ T, s + T)f- \int_{s+T}^{t + T} V(t + T,\tau) Q_0 
\big(|\uc(\tau, s+ T)f|^r  \uc(\tau, s+ T)f\big)
d\tau
$$
which in turn can be written as 
$$
V(t, s)f - \int_s^t V(t, \tau) Q_0 \big(|\uc(\tau + T, s + T)f|^r \uc(\tau + T, s + T)f \big)d\tau.
$$
By the uniqueness of the solution of the equation
$$\uc(t, s)f = V(t, s)f - \int_s^t V(t, \tau) Q_0(|\uc(\tau, s)f|^r  \uc(\tau, s)f )d\tau,$$
one deduces $\uc(t + T, s+ T) = \uc(t, s).$ Moreover, one has the property
$$\uc(p, r) = \uc(p, s) \circ \uc(s, r), \quad p, r, s \in \R.$$
For the solution $u(t, x; 0)$ of (\ref{eq:3.3}) (with $s=0$)  with initial data $f \in \hc$, set
$$
w_n = (u(nT, x; 0),  \partial_t u(nT, x; 0))= \uc(nT, 0)f,\: n \in \N.
$$
Therefore
\begin{equation} \label{eq:4.2}
w_{n+1} = \uc((n+1)T, 0)f = \uc((n+1)T, nT) \circ \uc(nT, 0)f = \uc(T, 0)w_n.
\end{equation}
Setting $\fc = \uc(T, 0)$, we obtain a system
\begin{equation} \label{eq:4.3}
w_{n+1} = \fc (w_n),\quad n \geq 0.
\end{equation}
with a {\it nonlinear map} $\fc: \hc \rightarrow \hc.$ Consider the linear map $L = V(T, 0): \hc \rightarrow \hc.$
Our purpose is to how that $L$ is the Fr\'echet derivative of $\fc$ at the origin in the Hilbert space $\hc$.
We use the representation
$$\fc(h) = Lh -\int_0^T V(T, \tau)Q_0\big(|u(\tau, x; h)|^r u(\tau, x;h)\big) d \tau,$$
where $u(t, x; h)$ is the solution of (\ref{eq:3.3}) with $s=0$ and initial data $h$ at time $0$. 
Using the Strichartz estimate and Proposition~\ref{frechet}, we obtain for $\|h\|_1\leq 1$ the bound 
$$
\sup_{0\leq t\leq T} \|\fc(h) -Lh\|_1\leq
 C \|u(t , x; h)\|^{r+1}_{L^{r+1}([0,T];L^{2r + 2}_x(\R^3)}\leq C  \|h\|_1^{r+ 1},
 $$
where $C > 0$ depends on $T$ but is independent of $h$. This implies immediately that $L$ is the Fr\'echet derivative of $\fc$ at the origin.
 
For the exponential instability at $u = 0$ we use following definition (see \cite{GTZ}). 
\begin{defn}
$(i)$ The equilibrium $u = 0$ is unstable if there exists $\eps > 0$ such that for every $\delta >0$ one can find a sequence $\{u_n\}$ of solution of $(\ref{eq:4.3})$ such that $0 < \|u_0\|_1 \leq \delta$ and $\|u_n\|_1\geq  \eps$ for some $n \in \N.$\\
$(ii)$ The equilibrium $u = 0$ is exponentially unstable at rate $\rho > 1$ if there exist $\epsilon > 0$ and $C > 0$ such that
for every $\delta > 0$ one can find a sequence $\{u_n\}$ of solution of  $(\ref{eq:4.3})$  satisfying $0 < \|u_0\|_1 \leq \delta$
and $\|u_N\|_1 \geq C \rho^N \|u_0\|_1$ for any $N$ for which we have 
$$\max\{\|u_0\|_1,...,\|u_N\|_1\} \leq \epsilon.$$
\end{defn} 

Clearly, the exponential instability implies instability. We consider the case when
the spectral radius $r(L)$ of $L$ is greater than 1. The analysis in Section~2 shows that there exist positive potentials $q(t, x) \geq 0$ for which $r(L) > 1$. We will apply the Rutman-Dalecki theorem or a more general version due to D. Henry (Theorem 5.1.5 in \cite{He}). For this purpose we need the condition
\begin{equation} \label{eq:4.4}
\|\fc(u) - L u\|_1 \leq b \|u\|_1^{1 + p}\: {\rm whenever}\:\|u\|_1 \leq a
\end{equation}
for some $a > 0,\: b > 0$ and $p > 0.$ In our case the condition (\ref{eq:4.4}) holds with $p = r$ and $a =1$.  Thus we obtain the following
\begin{thm}\label{th4} 
Assume that the linear operator $L$ has spectral radius $r(L) > 1$. Then $\fc$ is exponentially unstable  at $u = 0$ with rate $r(L).$
\end{thm}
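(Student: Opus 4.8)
The plan is to deduce Theorem~\ref{th4} from the abstract instability principle of Henry (Theorem 5.1.5 in \cite{He}) by verifying its hypotheses for the discrete system \eqref{eq:4.3}. Three ingredients are already in place: first, $\fc(0)=0$, since the zero data produce the zero solution of \eqref{eq:3.3}; second, $\fc$ is Fr\'echet differentiable at the origin with derivative $L=V(T,0)$, as shown above; and third, the remainder obeys the H\"older bound \eqref{eq:4.4} with $p=r$ and $a=1$. It therefore remains to exploit the spectral hypothesis $r(L)>1$ to produce, for every $\delta>0$, an orbit of \eqref{eq:4.3} starting within $\delta$ of the origin whose norm grows at least like $r(L)^N$ as long as it stays small.

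First I would record the spectral structure of $L$ obtained in Section~2. Since $V(T,0)-U_0(T)$ is compact and the spectral radius of $U_0(T)$ on $\hc$ is at most $1$, the spectrum of $L$ in $\{|z|>1\}$ consists of isolated eigenvalues of finite multiplicity accumulating only on $\{|z|=1\}$. Consequently, if $\rho:=r(L)>1$, the supremum defining $r(L)$ is attained: there are finitely many eigenvalues of modulus $\rho$, there is a genuine gap $\{\,\rho''<|z|<\rho\,\}$ free of spectrum for some $\rho''<\rho$, and one may fix an eigenvalue $\lambda_0$, $|\lambda_0|=\rho$, with eigenvector $v\in\hc$, $Lv=\lambda_0 v$. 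Let $P$ be the Riesz projection of $L$ associated with the eigenvalues of modulus $\rho$; it commutes with $L$ and gives an $L$-invariant splitting $\hc=X_+\oplus X_-$, where $X_+=P\hc$ is finite-dimensional, the exact identity $\|L^n w\|_1=\rho^n\|w\|_1$ holds on the eigenline $\C v\subset X_+$, and $r(L|_{X_-})\leq\rho''<\rho$, so that $\|L^n|_{X_-}\|_1\leq C_-\theta^n$ for any fixed $\theta\in(\rho'',\rho)$.

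Next I would construct the growing orbit by a bootstrap. Writing $\fc(u)=Lu+g(u)$ with $\|g(u)\|_1\leq b\|u\|_1^{1+r}$ for $\|u\|_1\leq 1$, I take $u_0=\eta v$ with $\eta>0$ small enough that $\|u_0\|_1\leq\delta$, and decompose each iterate as $u_n=p_n+s_n$, $p_n=Pu_n\in X_+$, $s_n=(I-P)u_n\in X_-$. The linear part keeps these components separate, expanding $p_n$ at rate $\rho$ and contracting $s_n$ at rate $\theta<\rho$; the coupling enters only through the higher-order term $g(u_n)$. Fixing $\epsilon\in(0,1]$ small and assuming inductively that $\max_{0\le n\le N}\|u_n\|_1\le\epsilon$, I would show by a discrete Gronwall (geometric-series) estimate that $s_n$ stays negligible relative to $p_n$ and that $\|p_n\|_1\geq c\,\rho^{n}\|u_0\|_1$. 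The summation of the nonlinear contributions, each bounded through \eqref{eq:4.4} by $b\,\epsilon^{r}\|u_n\|_1$ and then propagated by the linear flow, converges precisely because of the extra gain $\epsilon^{r}$ once $\epsilon$ is small; this is exactly where the H\"older exponent $p=r>0$ is used. The result is $\|u_N\|_1\geq C\rho^N\|u_0\|_1$ on the whole range where $\max_{0\le n\le N}\|u_n\|_1\le\epsilon$, which is the claimed exponential instability at rate $\rho=r(L)$.

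The main obstacle is this last step: controlling the nonlinear transfer of norm between $X_+$ and $X_-$ so that the expanding mode genuinely dominates, and doing so at the sharp rate $\rho=r(L)$ rather than at a strictly smaller one. What makes the sharp rate attainable --- and the reason the discreteness statement of Section~2 is needed --- is that $\rho$ is realized by an honest eigenvalue, yielding the exact identity $\|L^n v\|_1=\rho^n\|v\|_1$ on the eigenline; were $\rho$ merely an accumulation point of the spectrum, one would recover instability at every rate below $r(L)$ but not necessarily at $r(L)$ itself. Since these ingredients are exactly the hypotheses of Theorem 5.1.5 in \cite{He}, invoking that theorem with $p=r$ delivers the conclusion at once.
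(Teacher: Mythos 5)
Your proposal is correct and follows essentially the same route as the paper: both verify that $\fc(0)=0$, that $L=V(T,0)$ is the Fr\'echet derivative of $\fc$ at the origin, and that the remainder satisfies the bound \eqref{eq:4.4} with $p=r$ and $a=1$, and then conclude by invoking Henry's Theorem 5.1.5 in \cite{He} (equivalently the Rutman--Dalecki theorem). The extra material you include --- the Riesz projection splitting, the spectral gap from the discreteness of eigenvalues in $\{|z|>1\}$, and the bootstrap along the eigenline --- is a sketch of the internal mechanism of Henry's theorem rather than an alternative argument, and the paper simply treats that theorem as a black box.
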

It remains to observe that Theorem~\ref{th4} implies Theorem~\ref{th3}. 
\begin{rem}
The above argument showing nonlinear instability crucially relies on the fact that we deal with a semi-linear problem,  i.e. the solution map of \eqref{eq:3.4} is of class $C^1$ on  $\hc$.
It is worth to mention that there are examples of problems which are not semi-linear (the solution map is  not of class $C^1$) for which one can still get 
the nonlinear instability of some particular solutions (known to be linearly unstable).  In such cases a "more nonlinear approach" is needed. We refer to \cite{Grenier,RT} for more details on this issue. 
\end{rem}
\section{Generalizations}
 
We can consider more general nonlinear equations

\begin{equation} \label{eq:5.1}
\partial_t^2 u - \Delta_x u + |u|^r u +\sum_{j = 0}^{r-1} q_j(t, x)|u|^j u = 0, \:r = 2,3
\end{equation}
with time-periodic functions $q_j(t + T_j, x) = q_j(t, x) \geq 0,\: j = 0,\cdots,  r-1$ having compact support with respect to $x$.
For solutions
 $$u(t, x) \in C([0, \tau], H^2(\R^3)) \cap C^1([0,\tau], H^1(\R^3))\cap L^{\frac{2r+2}{r-2}}_t([0, A], L^{2r+2}_x(\R^3))$$ we obtain
$$\Re \Bigl(\int_{\R^3} (\partial_t^2 u - \Delta_x u + |u|^r u)\bar{u}_tdx \Bigr) = - \Re(\int_{\R^3} \sum_{j=0}^{r-1}q_j(t, x)|u|^ju \bar{u}_tdx)$$
$$= -\frac{d}{dt}\sum_{j=0}^{r-1}\Bigl(\int_{\R^3} \frac{1}{j+ 2}q_j |u|^{j + 2} dx\Bigr) + \sum_{j=0}^{r-1}\frac{1}{j + 2}\int_{\R^3} (q_j)_t |u|^{j +2} dx
$$
and
$$\frac{1}{j + 2}\Bigl|\int_{\R^3} (q_j)_t |u|^{j +2} dx\Bigr| \leq C_j \Bigl(\int_{\R^3} |u|^{r+2} dx \Bigr) ^{1 - \frac{r- j}{r+2}}, \: j= 0,\cdots ,r-1. $$
Setting
$$X(t) \equiv \int_{\R^3}\Bigl(\frac{1}{2}|u_t|^2(t, x) +\frac{1}{2}|\nabla_x u|^2(t, x) +\sum_{j=0}^{r-1} \frac{1}{j+2}q_j |u|^{j +2}(t, x) +\frac{1}{r+2}|u|^{r+2}(t, x)\Bigr) dx, \: 0\leq t \leq A,$$
one deduce
$$|X'(t)| \leq B_r \sum_{j=0}^{r-1}X(t)^{1 - \frac{r-j}{r+2}}
\leq B_r(1+X(t))^{1-\frac{1}{r+2}}\,.
$$
Therefore we can apply  Lemma~\ref{ode} to the quantity $Y(t)=1+X(t)$ which implies, as before, the global existence and  the polynomial bounds for the  Cauchy problem for (\ref{eq:5.1}). 
\footnotesize

\end{document}